\g@addto@macro{\endabstract}{\@setabstract}
\newcommand{\authorfootnotes}{\renewcommand\thefootnote{\@fnsymbol\c@footnote}}%
\numberwithin{equation}{section}
\newtheorem{theorem}{Theorem}
\newtheorem{proposition}{Proposition}
\newtheorem {lemma}{Lemma}
\newtheorem{corollary}[theorem]{Corollary}
\newtheorem{remark}{Remark}
\begin{document}

\begin{center}
  \LARGE
  Sufficient Stochastic Maximum Principle for Discounted Control Problem \par \bigskip

  \normalsize
  \authorfootnotes
  Bohdan Maslowski \footnote{The author was supported in part by the GACR grant No. P201/10/0752.}\textsuperscript{1} ,
  Petr Veverka \footnote{The author was supported by the Czech CTU grant SGS12/197/OHK4/3T/14, MSMT grant INGO II INFRA LG12020 and GACR GRANT No. P201/10/0752.}\textsuperscript{2} \par \bigskip
  \textsuperscript{1}Faculty of Mathematics and Physics, Charles University in Prague, Czech Republic, maslow@karlin.mff.cuni.cz \par

  \textsuperscript{2}Faculty of Nuclear Sciences and Physical Engineering, Czech Technical University in Prague, Czech Republic, petr.veverka@fjfi.cvut.cz \par \bigskip

\end{center}


\noindent \textbf{Abstract.} {\small In this article, the sufficient Pontryagin's maximum principle for infinite horizon discounted stochastic control problem is established. The sufficiency is ensured by an additional assumption of concavity of the Hamiltonian function.
Throughout the paper, it is assumed that the control domain $U$ is a convex set and the control may enter the diffusion term of the state equation. The general results are applied to the controlled stochastic logistic equation of population dynamics.

\noindent {\small \textbf{Key Words:} Stochastic maximum principle, discounted control problem, FBSDE, infinite horizon, stochastic logistic equation.}

\noindent \textbf{AMS Subject Classification:} 60H10, 93E20.

\section{\textbf{Introduction}}

\noindent In the present paper, the discounted stochastic optimal control problem is dealt with. This kind of problem is very popular and plentifully used in many domains, especially in stochastic finance since it leads to maximizing the average discounted agent's utility. The method of solving the problem used here is the maximum principle which, in deterministic setting, was formulated in 1950s by the group of L. S. Pontryagin. For diffusions, the maximum principle has been studied by many researchers. The earliest versions of a maximum principle for such process were suggested by Kushner \cite{kushner} and Bismut \cite{bismut}. Further progress on the subject was subsequently made by Bensoussan \cite{bensoussan}, Peng \cite{peng}, Cadenillas and Haussmann \cite{haussmann} and others.
Originally, the main technical tool used when considering maximum principle was the calculus of variations which was
not easy to apply to real examples and was not convenient for numerical simulations.  The turning point which led to its further intensive study was the paper by Pardoux and Peng \cite{pardouxpeng} where the general (nonlinear) problem of Backward Stochastic Differential Equation (BSDE in short) was formulated and the existence and uniqueness theorems were given. BSDE theory provides an elegant and easy-to-handle tool to describe the adjoint processes and to formulate the maximum principle by means of the Hamiltonian function. Also, numerical methods for this kind of processes are intensively studied. \\

\noindent These results were extended in numerous papers. For example, in case of diffusions with jumps, a necessary maximum principle on the finite time horizon was formulated by Tang and Li \cite{tang} whereas sufficient optimality conditions on finite time horizon were specified by \O ksendal, Sulem and Framstad \cite {oksendal}. Maximum principle on infinite time horizon was studied in Haadem, Proske and \O ksendal \cite{oksendal priklad}.
Interested reader can also find a large amount of papers on maximum principle for a variety of cases:
Singular control (Bahlali and Mezerdi \cite{bahlali mezerdi}, Dufour and Miller \cite{dufour}, \O ksendal and Sulem \cite{oksendal singular levy}), Impulse control (Chikodza \cite{chikodza}, Wu and Zhang \cite{wu zhang}),
Controlled SPDE's (Al-Hussein \cite{hussein}, Fuhrman, Hu and Tessitore \cite{fuhrman tessitore hu}),
Delayed controlled systems (Agram, Haadem, \O ksendal and Proske \cite{oks delay}, \O ksendal, Sulem and Zhang \cite{oks sulem zhang}), Near-optimal control (Zhou \cite{zhou near opt}) and many others. \\

\noindent Applying the Hamiltonian formalism to stochastic control problems, the class of Forward-Backward Stochastic Differential Equations (FBSDE in short) naturally arises in form of a partially-coupled system of the forward equation for the controlled diffusion and the adjoint backward equation for ,,generalized Lagrange multipliers''.
FBSDE with infinite (or random) time horizon are still subject to intensive study. The question which is quite delicate is the behaviour of the solution processes at infinity.
There are several papers (e.g. Pardoux \cite{pardoux}, Peng and Shi \cite{peng shi}, Yin \cite{yin}, Wu \cite{wu}) answering this question under different sets of assumptions both on the coefficients of the FBSDE and on the terminal condition. Some of these papers naturally require that the terminal condition of the backward equation at infinity (in certain sense) is given in advance with suitable properties that consequently determine the space for the solution processes.
Unfortunately, this is not directly applicable to stochastic control problems on infinite time horizon since  the terminal condition is usually not known. Therefore, a different approach has to be introduced, namely we  specify the solution space of random functions which determines asymptotical behaviour of the processes, suitable for our purpose.
This approach appears already in Peng and Shi \cite{peng shi} where however the conditions on coefficients, are rather restrictive -  global Lipschitz property in all variables is assumed (which, in fact, excludes polynomial coefficients of higher order than two) and some special kind of so called weak monotonicity (also known as
one-sided Lipschitz property).
In this case, the solution process is vanishing at infinity and therefore, the terminal
condition is zero a.s. In the paper by Wu \cite{wu}, a different monotonicity condition is assumed to
obtain a solution process with non zero (in general) yet still a.s. constant terminal condition. The most general result is due
to Yin \cite{yin} who weakens the assumptions to obtain the solution in some exponentially-weighted $\mathbf{L}^2$ space for some
suitable discount factor. Nevertheless, in the latter paper, existence of the solution to the backward part of the system employs the result by Pardoux \cite{pardoux} based on the knowledge of the terminal condition.\\

\noindent The novelty of present paper is twofold: First of all, the applicability of the maximum principle to infinite time horizon control problems has been studied under a natural set of conditions that are verifiable and may cover nonlinear
problems with non-Lipschitz coefficients. Secondly, these general results have been used to solve the corresponding control problem for controlled stochastic logistic equation (which indeed involves non-Lipschitz terms), which
is an important model of population dynamics. In this case also the solvability of the closed-loop FBSDE has been established.

\noindent The paper is organized as follows: The discounted control problem is formulated in the second section. In the third section the existence and uniqueness theorem of the solution to FBSDE on infinite time horizon is provided.
Fourth section contains the main result of the paper -  the sufficient infinite time maximum principle for the discounted problem.
In the fifth section, some standard examples are solved by using results of the fourth section to show the applicability.
The last section which contains our main example is devoted to the above mentioned controlled stochastic logistic equation.

\section{Formulation of the problem}
\subsection{Preliminaries}
We are given a basic probability space $\big(\Omega,\mathcal{F}, \mathbf{P} \big)$ with $\mathbb{R}^d$-valued standard Wiener process $W = \big(W_t \big)_{t\geq0}$. Let $\big(\mathcal{F}^W_t \big)_{t\geq0}$ be
the canonical filtration of $W$, i.e. $\mathcal{F}^W_t = \sigma\big(W_s; s \leq t \big)$, and
$\big(\mathcal{F}_t \big)_{t \geq 0}$ be its $\mathbf{P}-$null sets augmentation. We denote $\mathcal{F}_\infty = \bigvee_{t\geq0}\mathcal{F}_t \subset \mathcal{F}$. Further, to simplify the notation, we write just 'a.s.' instead of '$\mathbf{P}$ -a.s.'. We denote $|\cdot|$ and $||\cdot||$ the Euclidean norms in $\mathbb{R}^n$ and $\mathbb{R}^{n \times d}$ respectively. Further, $\big<\cdot,\cdot \big>$ stands for standard scalar product in $\mathbb{R}^n$ and $Tr(\cdot)$ denotes the trace of a square matrix.

\noindent Now, for $\beta \in \mathbb{R}$ and any Banach space $\mathcal{X}$ with norm $||\cdot||_{\mathcal{X}}$, we introduce the space of random processes
\begin{align}\label{eq:vazeny L2 prostor}\nonumber
\mathbf{L}^{2,\beta}_{\mathcal{F}}\left(\mathbb{R}_+;\mathcal{X} \right) := \left\{ v: \mathbb{R}_+ \times \Omega \rightarrow \mathcal{X}: v\ {\rm {is} }\ \left(\mathcal{F}_t\right)_{t\geq0}-\right. & \left. {\rm progressive\ with}\ \right. \\
&\left. \mathbf{E}\int^{+\infty}_0 e^{\beta t}||v_t||_{\mathcal{X}}^2dt < + \infty  \right\}.
\end{align}
\noindent We denote $\mathbf{L}^{2,0}_{\mathcal{F}}\left(\mathbb{R}_+;\mathcal{X} \right)$ just as $\mathbf{L}^2_{\mathcal{F}}\left(\mathbb{R}_+;\mathcal{X} \right)$ and

\begin{align}\label{eq:vazeny L2 prostor 2}\nonumber
\mathbf{L}^{2,\beta}_{\mathcal{F},loc}\left(\mathbb{R}_+;\mathcal{X} \right) := \left\{ v: \mathbb{R}_+ \times \Omega \rightarrow \mathcal{X}: v\ {\rm {is} }\ \left(\mathcal{F}_t\right)_{t\geq0}-\right. & \left. {\rm progressive\ with}\ \right. \\
&\left. \mathbf{E}\int^{T}_0 e^{\beta t}||v_t||_{\mathcal{X}}^2dt < + \infty,\ \forall T >0  \right\}.
\end{align}

\subsection{Discounted control problem}
The controlled state process $(X_t)_{t\geq 0}$ is a strong solution to the following controlled SDE in $\mathbb{R}^n$

\begin{align}\label{eq:difuse inf}
dX_t &= b(X_t,u_t)dt +  \sigma(X_t,u_t) dW_t, \quad \forall t\geq 0, \\ \nonumber
  X_0 &= x,
\end{align}

\noindent where $U$ is a convex subset of $\mathbb{R}^{k}$, the mappings $b:\mathbb{R}^n \times U \rightarrow \mathbb{R}^n$ and $\sigma:\mathbb{R}^n \times U \rightarrow \mathbb{R}^{n \times d}$ satisfy conditions specified in the next section.

\noindent Denote $\mathcal{U}_{ad}$ the set of all admissible controls as

\begin{equation}\label{eq:Uad}
\mathcal{U}_{ad} = \left\{ u: \mathbb{R}_+ \times \Omega \rightarrow U: u \in \mathbf{L}^2_{\mathcal{F}, loc}\left(0,+\infty;U \right)   \right\}.
\end{equation}

\noindent Any process $u(\cdot) \in \mathcal{U}_{ad}$ is called an admissible control. The cost functional takes the form

\begin{equation}\label{eq:funkcional}
J(u(\cdot)) = \mathbf{E} \int^{+\infty}_0 e^{-\beta t}f(X_t,u_t)dt,
\end{equation}

\noindent where $f: \mathbb{R}^n \times U \rightarrow \mathbb{R}$ is the penalization (or appreciation) function such that $J(u(\cdot))$ converges for every admissible control and $\beta >0$ is the discount factor. \\
Further, define the value function $v$ by

\begin{equation}\label{eq:cost fce}
v = \operatorname*{sup}_{u(\cdot) \in \mathcal{U}_{ad}}J(u(\cdot)).
\end{equation}

\noindent The goal is to find such a strategy $u^*(\cdot) \in \mathcal{U}_{ad}$ so that  the supremum in (\ref{eq:cost fce}) is attained in $u^*(\cdot)$, i.e. $v = J(u^*(\cdot))$.
For the discounted control problem (\ref{eq:difuse inf}), (\ref{eq:Uad}), (\ref{eq:funkcional}) and (\ref{eq:cost fce}) we use the abbreviation DCP.

\noindent Define a generalized Hamiltonian function $\mathcal{H} $ associated to control problem (\ref{eq:difuse inf}) - (\ref{eq:cost fce}) by $\mathcal{H}: \mathbb{R}^n\times U\times \mathbb{R}^n\times\mathbb{R}^{n\times d} \rightarrow \mathbb{R}$ as

\begin{equation}\label{eq:hamiltonian}
\mathcal{H}(x,u,y,z) = \big< b(x,u),y\big> + Tr(\sigma(x,u)'z) + f(x,u) -\beta \big<x,y\big>.
\end{equation}

\noindent The Hamiltonian is an analogue to the Lagrange function in the theory of constrained optimization since the variables $y$ and $z$ can be viewed as 'generalized Lagrange multipliers' and the functions $b$ and $\sigma$ as the constraints for the dynamics of the space process $X_t$. The additional term $-\beta \big<x,y\big>$ comes up from the
Lyapunov function of the FBSDE system, see Peng, Shi \cite{peng shi}.\\
\noindent We note that neither the extremal point of $\mathcal{H}$ w.r.t. $u$ nor the concavity/convexity w.r.t. $(x,u)$ do not depend on the last term $-\beta \big<x,y\big>$ .

\noindent Suppose further that $\mathcal{H}$ is differentiable in $x$ (with the gradient denoted as $\nabla_x\mathcal{H}$) and we consider the following BSDE

\begin{equation}\label{eq:hamilt bsde inf}
-dY_t = \nabla_x\mathcal{H}(X_t,u_t,Y_t,Z_t )dt - Z_tdW_t, \quad \forall t \geq 0,
\end{equation}
which, in application to the DCP, reads

\begin{equation}\label{eq:bsde inf}\nonumber
-dY_t =  \Big( \nabla_x b(X_t,u_t) Y_t + D_x \sigma(X_t,u_t) \cdot Z_t + \nabla_x f(X_t,u_t) -\beta Y_t \Big) dt - Z_t dW_t, \quad \forall t \geq 0.
\end{equation}

\noindent We use the notation $D_x \sigma(x,u) \cdot z = \sum_{i=1}^d \nabla_x \sigma^i(x,u)z^i \in \mathbb{R}^n,$ for $z \in \mathbb{R}^{n \times d}$, where $\sigma^i$ denotes the $i$-th column of the matrix $\sigma$.

\section{Solution to controlled decoupled FBSDE on infinite time horizon}

\noindent In this section, we study the following controlled FBSDE associated to the DCP

\begin{align}\label{eq:FBSDE}
dX_t &= b(X_t,u_t)dt + \sigma(X_t,u_t) dW_t,\ \forall t \geq 0 \\ \nonumber
  X_0 &= x \in \mathbb{R}^n, \\ \nonumber
-dY_t &=  \Big( \nabla_x b(X_t,u_t) Y_t + D_x \sigma(X_t,u_t) \cdot Z_t + \nabla_x f(X_t,u_t) -\beta Y_t \Big) dt - Z_t dW_t,\ \forall t \geq 0.
\end{align}

\noindent Since the system (\ref{eq:FBSDE}) is partially-coupled, i.e. the forward part does not depend on the solution of the backward equation, we can solve the forward part separately on
$\mathbb{R}_+$ obtaining the process $X$. In the next step, plugging the solution $X$ into the backward part, we find the solution processes $(Y,Z)$.
We start with results on infinite time horizon SDE which is given by

\begin{align}\label{eq:difuse inf new}
dX_t &= b(X_t,u_t)dt +  \sigma(X_t,u_t) dW_t, \quad \forall t\geq 0, \\ \nonumber
  X_0 &= x,
\end{align}

\noindent where $U$ is a convex bounded subset of $\mathbb{R}^{k}$ and $x \in \mathbb{R}^n$ is a deterministic initial condition. We further assume that the mappings

\begin{align}\nonumber
& b: \mathbb{R}^n \times U \rightarrow \mathbb{R}^n, \nonumber \\
& \sigma: \mathbb{R}^n \times U \rightarrow \mathbb{R}^{n \times d},\nonumber
\end{align}
satisfy the following conditions.

\begin{description}

\item[(H1)] $b(\cdot,\cdot)$ and $\sigma (\cdot,\cdot)$ are continuous on $\mathbb{R}^n \times \overline{U}$. \\

\item[(H2)] There exists $\mu_1 \in \mathbb{R}$ such that
\vspace{-2mm}
\begin{align}\label{eq:SDE H2}
\langle x_1-x_2,  b(x_1,u) - b(x_2,u)\rangle &\leq \mu_1 |x_1 - x_2|^2,
\end{align}
for any $(x_1, x_2, u) \in \mathbb{R}^n \times \mathbb{R}^n \times U$.

\item[(H3)](Lipschitz property) There exists a constant $L > 0$ such that
\vspace{-2mm}
\begin{equation}\label{eq:SDE H3}
|| \sigma(x_1,u) - \sigma(x_2,u)|| \leq L |x_1 - x_2|,
\end{equation}
for any $(x_1, x_2, u) \in \mathbb{R}^n \times \mathbb{R}^n \times U$.
\end{description}

\noindent The following choice of the solution space will allow to apply Theorem 1 bellow to sufficient
maximum principle. As we will see, the suitable solution space ensuring proper asymptotics is the space $\mathbf{L}^{2,-\beta}_{\mathcal{F}}\left(\mathbb{R}_+;\mathbb{R}^n \right)$ for some $\beta > 0$.
The following theorem holds.

\begin{theorem}\label{existence dopredne}[Existence and uniqueness] Let assumptions (H1)-(H3) hold.
Then for each control $u(\cdot) \in \mathcal{U}_{ad}$ and every initial condition $x \in \mathbb{R}^n$, SDE (\ref{eq:difuse inf new}) admits a unique solution $X = X^{x,u(\cdot)} \in \mathbf{L}^{2,-\beta}_{\mathcal{F}}\left(\mathbb{R}_+;\mathbb{R}^n \right)$ where $\beta - 2\mu_1 - 2L^2 >   0$.
\end{theorem}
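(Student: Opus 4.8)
\medskip
\noindent\textbf{Proof proposal.} The plan is to produce the solution on $\mathbb{R}_+$ in three stages: solve \eqref{eq:difuse inf new} uniquely on every finite interval $[0,T]$, glue these pieces together using uniqueness, and then verify the weighted integrability $X\in\mathbf{L}^{2,-\beta}_{\mathcal{F}}(\mathbb{R}_+;\mathbb{R}^n)$ by an It\^o--Gronwall estimate, the sign condition $\beta-2\mu_1-2L^2>0$ being precisely what makes that estimate close. Before anything else I would record the elementary consequences of the hypotheses: since $U$ is bounded, $\overline{U}$ is compact, so by (H1) the constants $c_0:=\sup_{v\in\overline{U}}|b(0,v)|$ and $\tilde c_0:=\sup_{v\in\overline{U}}\|\sigma(0,v)\|$ are finite, and combining them with \eqref{eq:SDE H2} and \eqref{eq:SDE H3} gives, uniformly in $(t,\omega)$ and for all $x\in\mathbb{R}^n$,
\[
\langle x,\,b(x,u_t)\rangle\le \mu_1|x|^2+c_0|x|,\qquad \|\sigma(x,u_t)\|\le \tilde c_0+L|x|.
\]
In particular $b$ itself need not have linear growth, but its inner product with $x$ does, which is what rules out explosion.

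\noindent\textbf{Finite horizon and gluing.} Fix $T>0$. On $[0,T]$ the random coefficients $(t,\omega,x)\mapsto b(x,u_t(\omega))$ and $\sigma(x,u_t(\omega))$ are $(\mathcal{F}_t)$-progressive, continuous in $x$, one-sided Lipschitz in $x$ with constant $\mu_1$ by \eqref{eq:SDE H2}, with $\sigma$ Lipschitz in $x$ by \eqref{eq:SDE H3}, and satisfy the growth bounds above, all uniformly in $(t,\omega)$. I would then invoke the standard well-posedness theory for stochastic differential equations with one-sided Lipschitz drift and Lipschitz diffusion: pathwise uniqueness follows at once by applying It\^o's formula to $|X_t-\bar X_t|^2$ for two solutions and Gronwall's lemma, while existence of a strong solution is obtained from Skorokhod's weak existence theorem together with the Yamada--Watanabe principle, the monotone bound excluding explosion. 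This yields a unique strong solution $X^{(T)}$ of \eqref{eq:difuse inf new} on $[0,T]$ with $\mathbf{E}\sup_{0\le t\le T}|X^{(T)}_t|^2<\infty$. For $0<T<T'$ the restriction of $X^{(T')}$ to $[0,T]$ solves \eqref{eq:difuse inf new} on $[0,T]$, hence equals $X^{(T)}$ by uniqueness; the family $\{X^{(T)}\}_{T>0}$ is therefore consistent and defines a unique $(\mathcal{F}_t)$-progressive process $X=X^{x,u(\cdot)}$ on $\mathbb{R}_+$ solving \eqref{eq:difuse inf new}, with $\mathbf{E}\sup_{0\le t\le T}|X_t|^2<\infty$ for every $T>0$.

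\noindent\textbf{The weighted bound.} Let $\tau_N=\inf\{t\ge 0:\,|X_t|\ge N\}$. I would apply It\^o's formula to $t\mapsto e^{-\beta t}|X_t|^2$ on $[0,T\wedge\tau_N]$ and take expectations; on that interval the integrand of the stochastic integral is bounded, so it is a true martingale and drops out. Using the growth bounds and $2ab\le\varepsilon a^2+\varepsilon^{-1}b^2$, the drift term is dominated by $\big(-\beta+2\mu_1+2L^2+\varepsilon\big)e^{-\beta t}|X_t|^2+C_\varepsilon e^{-\beta t}$ with $C_\varepsilon$ depending only on $c_0,\tilde c_0,L,\varepsilon$, whence, after rearranging and dropping the nonnegative left-hand boundary term,
\[
\big(\beta-2\mu_1-2L^2-\varepsilon\big)\,\mathbf{E}\!\int_0^{T\wedge\tau_N}\!\! e^{-\beta t}|X_t|^2\,dt\;\le\;|x|^2+C_\varepsilon\!\int_0^{\infty}\!\! e^{-\beta t}\,dt\;=\;|x|^2+\frac{C_\varepsilon}{\beta}.
\]
Since $\beta-2\mu_1-2L^2>0$ I may fix $\varepsilon>0$ with $\kappa:=\beta-2\mu_1-2L^2-\varepsilon>0$; letting $N\to\infty$ (so $T\wedge\tau_N\uparrow T$ a.s., because $X$ does not explode) and then $T\to\infty$, both by monotone convergence, gives $\mathbf{E}\int_0^{\infty}e^{-\beta t}|X_t|^2\,dt\le \kappa^{-1}\big(|x|^2+C_\varepsilon/\beta\big)<\infty$, i.e.\ $X\in\mathbf{L}^{2,-\beta}_{\mathcal{F}}(\mathbb{R}_+;\mathbb{R}^n)$, as claimed.

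\noindent\textbf{Expected main obstacle.} Gluing and the weighted estimate are routine It\^o/Gronwall bookkeeping; the delicate point is finite-horizon well-posedness, because $b$ is assumed only continuous in $x$ (not Lipschitz, not even locally) and carries no growth bound of its own beyond the monotone estimate inherited from (H2). The standard route -- weak existence via Skorokhod, pathwise uniqueness from \eqref{eq:SDE H2}--\eqref{eq:SDE H3}, and the Yamada--Watanabe theorem, with the monotone bound preventing blow-up -- does the job, but identifying the weak limit as a genuine strong solution is where the care is needed; in the final estimate the localization by $\tau_N$ is used only to legitimise interchanging expectation with the stochastic integral, the resulting bound being uniform in $N$.
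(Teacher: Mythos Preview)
Your argument is correct and reaches the same conclusion, but your route to \emph{existence} differs from the paper's. The paper follows the truncation approach of Friedman: for each $n\in\mathbb{N}$ it trims the coefficients to the ball of radius $n$, solves the resulting problem to obtain a local solution $X^n$ up to the exit time $\tau_n=\inf\{t>0:|X^n_t|\ge n\}$, derives the same weighted estimate you wrote (their inequality (\ref{eq:SDE norma reseni})) directly for the localised process, and then invokes a consistency argument with the Lyapunov function $V(x)=1+|x|^2$ to assemble a single global non-exploding solution in $\mathbf{L}^{2,-\beta}_{\mathcal{F}}$. You instead bypass the truncation of coefficients: you appeal to weak existence via Skorokhod plus pathwise uniqueness and Yamada--Watanabe to get strong finite-horizon solutions, and only \emph{afterwards} localise by $\tau_N$ to justify the It\^o computation. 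Both routes rest on the same growth bounds $\langle x,b(x,u)\rangle\le(\mu_1+\varepsilon)|x|^2+K_\varepsilon$ and $\|\sigma(x,u)\|\le L|x|+\overline{K}$ (your $c_0,\tilde c_0$ are the paper's $K_\varepsilon,\overline{K}$), and both finish with the identical It\^o estimate on $e^{-\beta t}|X_t|^2$; uniqueness is likewise the same computation (the paper packages it as Lemma~\ref{SDE apriori}). The paper's approach is more constructive and closer to classical ODE theory, while yours is somewhat more abstract but has the merit of making explicit where mere continuity of $b$ in $x$ (rather than local Lipschitz) enters --- a point the paper's sketch leaves to the cited reference.
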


\begin{proof}[\bf{Proof:}]
Uniqueness follows from Lemma \ref{SDE apriori} below.
The proof of existence is similar to Theorem 2.2. in Friedman \cite{friedman} and it will be only sketched.
First note that by virtue of (H1)-(H3) we have that for all $\varepsilon > 0$ there are constants $K_{\varepsilon}, \overline{K}$
($\overline{K}$ independent of $\varepsilon$) such that

\begin{align}\label{eq:SDE odhady}
\langle x, b(x,u)\rangle &\leq \left(\mu_1 + \varepsilon \right) |x|^2 + K_{\varepsilon}, \nonumber \\
|| \sigma(x,u)|| &\leq L |x| + \overline{K},
\end{align}
for all $x>0, u \in U$. Now for all $n \in \mathbb{N}$ define

\begin{equation}\label{eq:SDE markov cas}
\tau_n = \operatorname*{inf}\{t > 0: \left|X^n_t \right| \geq n \},
\end{equation}
where $(X^n_t)$ is a local solution obtained for trimmed coefficients on a ball of radius $n$ and for time $t \in [0,\tau_n]$. Due to (H1)-(H3) and \ref{eq:SDE odhady} one obtains

\begin{align}\label{eq:SDE norma reseni}
\operatorname*{sup}_{t \geq 0} \mathbf{E}&\left[e^{-\beta (t\wedge \tau_n)}\left|X^n_{t\wedge \tau_n}\right|^2 \right]
+ \left(\beta -2(\mu_1 + \varepsilon) - 2L^2 \right)\mathbf{E} \int^{\tau_n}_0 e^{-\beta (s\wedge \tau_n)}\left|X^n_{s\wedge \tau_n}\right|^2 ds  \nonumber \\
&\leq |x_0|^2 + \frac{2}{\beta}\left(K_{\varepsilon} + \overline{K}^2 \right) < + \infty.
\end{align}

\noindent A similar estimate can be easily obtained with the supremum inside the expectation. By using the standard consistency argument and Lyapunov techniques with Lyapunov function $V(x) = 1 + |x|^2$ we conclude that there is a limit process $(X_t)_{t \geq 0}$ in $\mathbf{L}^{2,-\beta}_{\mathcal{F}}\left(\mathbb{R}_+;\mathbb{R}^n \right)$ with $\beta - 2\mu_1 - 2L^2 > 0$.
\end{proof}

\begin{lemma}\label{SDE apriori}[A priori estimate for SDE] Let assumptions (H1)-(H3) hold and $u(\cdot) \in \mathcal{U}_{ad}$ be arbitrary but fixed. Let $X^1 = X^{u(\cdot),1}$ and $X^2= X^{u(\cdot),2}$ be two solutions to (\ref{eq:difuse inf new}) with initial values $x_1$ and $x_2$ respectively, belonging to $\mathbf{L}^{2,-\beta}_{\mathcal{F}}\left(\mathbb{R}_+;\mathbb{R}^n \right)$. Then the following a priori estimate holds
\begin{align}\label{eq:SDE apriori}
\operatorname*{sup}_{t \geq 0} \mathbf{E}\left( e^{-\beta t} |\widehat{X}_t|^2 \right)+ \left(\beta -2\mu_1 - 2L^2 \right) \mathbf{E}\int^{+\infty}_0 e^{-\beta t} |\widehat{X}_s|^2 ds \leq
|x_1 - x_2|^2,
\end{align}
for $\beta >  2\mu_1 + 2L^2$ and $\widehat{X}_t = X^1_t - X^2_t$.
\end{lemma}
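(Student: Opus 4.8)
The plan is to apply It\^o's formula to the process $e^{-\beta t}|\widehat{X}_t|^2$, to exploit the one-sided Lipschitz (dissipativity) bound (H2) on $b$ and the Lipschitz bound (H3) on $\sigma$ in order to dominate the drift, and then to remove the resulting local martingale by a routine localization and a limit passage. Since $X^1$ and $X^2$ are driven by the same admissible control $u(\cdot)$ and the same Wiener process $W$, the difference $\widehat{X}_t = X^1_t - X^2_t$ solves
\[
d\widehat{X}_t = \big(b(X^1_t,u_t)-b(X^2_t,u_t)\big)\,dt + \big(\sigma(X^1_t,u_t)-\sigma(X^2_t,u_t)\big)\,dW_t, \qquad \widehat{X}_0 = x_1-x_2 ,
\]
and, being a difference of strong solutions of (\ref{eq:difuse inf new}) lying in $\mathbf{L}^{2,-\beta}_{\mathcal{F}}\left(\mathbb{R}_+;\mathbb{R}^n \right)$, it has a.s.\ continuous and non-exploding paths.

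First I would apply It\^o's formula to $e^{-\beta t}|\widehat{X}_t|^2$ and use (H2) in the form $2\langle \widehat{X}_s, b(X^1_s,u_s)-b(X^2_s,u_s)\rangle \le 2\mu_1|\widehat{X}_s|^2$ together with (H3) in the form $\|\sigma(X^1_s,u_s)-\sigma(X^2_s,u_s)\|^2 \le L^2|\widehat{X}_s|^2$, which after rearranging the drift gives, for every $t\ge 0$,
\[
e^{-\beta t}|\widehat{X}_t|^2 + (\beta - 2\mu_1 - L^2)\int_0^t e^{-\beta s}|\widehat{X}_s|^2\,ds \le |x_1-x_2|^2 + M_t ,
\]
where $M_t = \int_0^t 2 e^{-\beta s}\langle \widehat{X}_s, (\sigma(X^1_s,u_s)-\sigma(X^2_s,u_s))\,dW_s\rangle$ is only a continuous local martingale, since $\widehat X\in\mathbf{L}^{2,-\beta}_{\mathcal F}$ does not a priori control its fourth moments. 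I would therefore localize along $\tau_N = \inf\{t\ge 0 : |\widehat{X}_t|\ge N\}$: by path continuity and non-explosivity $\tau_N \uparrow +\infty$ a.s., and on $[0,\tau_N]$ the integrand of $M$ is bounded (its $\mathbb{R}^d$-norm by $2L e^{-\beta s}|\widehat X_s|^2\le 2LN^2$, using $\|\sigma(X^1_s,u_s)-\sigma(X^2_s,u_s)\|\le L|\widehat X_s|\le LN$), so $M_{\cdot\wedge\tau_N}$ is a genuine zero-mean martingale. Replacing $t$ by $t\wedge\tau_N$ and taking expectations then yields
\[
\mathbf{E}\big[e^{-\beta(t\wedge\tau_N)}|\widehat{X}_{t\wedge\tau_N}|^2\big] + (\beta - 2\mu_1 - L^2)\,\mathbf{E}\int_0^{t\wedge\tau_N} e^{-\beta s}|\widehat{X}_s|^2\,ds \le |x_1-x_2|^2 .
\]

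Finally, since $\beta - 2\mu_1 - L^2 \ge \beta - 2\mu_1 - 2L^2 > 0$, both terms on the left are non-negative, so I would send $N\to\infty$ (Fatou for the first term, monotone convergence for the integral) and then $t\to\infty$ (monotone convergence for the integral, and take the supremum over $t\ge 0$ in the first term, the right-hand side being independent of $t$), obtaining the estimate with coefficient $\beta-2\mu_1-L^2$; as the integral is non-negative and $\beta-2\mu_1-2L^2\le\beta-2\mu_1-L^2$, the stated inequality (\ref{eq:SDE apriori}) follows a fortiori. In particular, taking $x_1=x_2$ forces $\widehat{X}\equiv 0$, i.e.\ the uniqueness claim used in Theorem \ref{existence dopredne}. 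The computation is entirely routine; the only points that deserve care are the verification that the stopped process $M_{\cdot\wedge\tau_N}$ is a true martingale and that $\tau_N\to+\infty$ a.s., both of which rest on the path continuity and non-explosivity of $X^1,X^2$ guaranteed by their membership in $\mathbf{L}^{2,-\beta}_{\mathcal{F}}\left(\mathbb{R}_+;\mathbb{R}^n \right)$.
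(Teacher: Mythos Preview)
Your proof is correct and follows essentially the same route as the paper's: apply It\^o's formula to $e^{-\beta t}|\widehat{X}_t|^2$, invoke (H2)--(H3) to control the drift and quadratic variation, and take expectations. The only differences are cosmetic---you add an explicit localization along $\tau_N$ where the paper takes $\mathbf{E}(\cdot)$ directly, and you obtain the slightly sharper coefficient $\beta-2\mu_1-L^2$ before weakening it to match the statement---but the argument is otherwise identical.
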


\begin{proof}[\bf{Proof:}]
First assume that $\beta \in \mathbb{R}$ is arbitrary and fix some $t>0$. At the end of the proof, the right value of $\beta$ will be specified.
Using It\^o formula to $e^{-\beta t}|\widehat{X}_t|^2$ on $[0,t]$ one arrives at
\begin{align}\label{eq:SDE apriori Ito}
e^{-\beta t}|\widehat{X}_t|^2 &+ \beta \int^{t}_0 e^{-\beta s} |\widehat{X}_s|^2 ds \nonumber \\
&=|x_1 - x_2|^2 + 2\int^{t}_0 e^{-\beta s} \big<\widehat{X}_s, b(X^1_s,u_s)- b(X^2_s,u_s) \big>ds  \nonumber \\
&+2\int^{t}_0 e^{-\beta s} \big<\widehat{X}_s, \left(\sigma(X^1_s,u_s)- \sigma(X^2_s,u_s)\right)dW_s \big> \nonumber \\
&+ \int^{t}_0 e^{-\beta s} ||\sigma(X^1_s,u_s)- \sigma(X^2_s,u_s)||^2 ds.
\end{align}

\noindent Now, taking $\mathbf{E}(\cdot)$ and employing (H2)-(H3) we obtain

\begin{align}\label{eq:SDE apriori Ito2}
\mathbf{E} \left( e^{-\beta t}|\widehat{X}_t|^2 \right) &+ \left(\beta - 2\mu_1 - 2L^2 \right) \mathbf{E}\int^{t}_0 e^{-\beta s} |\widehat{X}_s|^2 ds  \leq |x_1 - x_2|^2.
\end{align}
Then for $\beta >  2\mu_1 + 2L^2$, we get (\ref{eq:SDE apriori}).
\end{proof}

\vspace{3mm}

\begin{remark} One could easily obtain a similar estimate to (\ref{eq:SDE apriori}) but with the supremum inside the expectation. In that case, we have to consider $\beta >  2\mu_1 + 2L^2(C^2 + 1)$ where
$C$ is the constant from Burkholder-Davis-Gundy inequality. For purposes of deriving the sufficient maximum principle, the estimation (\ref{eq:SDE apriori}) is satisfactory.
\end{remark}

Now, we will be interested in the solution to the BSDE (\ref{eq:FBSDE}).
We assume that we already know the solution $X = X^{u(\cdot)} \in \mathbf{L}^{2,-\beta}_{\mathcal{F}}\left(\mathbb{R}_+;\mathbb{R}^n \right)$ to SDE
(\ref{eq:difuse inf new}) for some fixed $u(\cdot) \in \mathcal{U}_{ad}$ assuming (H1)-(H3). The couple $(X,u)$ will be the (fixed) input for the BSDE (\ref{eq:FBSDE}).

\noindent We start the study of the backward equation by further specifying the coefficients $b,\sigma,f$. We assume that

\begin{align}
& b: \mathbb{R}^n \times U \rightarrow \mathbb{R}^n, \nonumber \\
& \sigma: \mathbb{R}^n \times U \rightarrow \mathbb{R}^{n \times d}, \nonumber \\
& f: \mathbb{R}^n \times U \times \rightarrow \mathbb{R},\nonumber
\end{align}
such that
\begin{description}

\item[(H4)] $b(x,u), \sigma(x,u)$ and $f(x,u)$ are continuously differentiable in $x$ for all
$u \in U$. \\

\item[(H5)] There exists $\mu_2 \in \mathbb{R}$ such that
\vspace{-2mm}
\begin{align}\label{eq:BSDE H5}
\langle y_1-y_2, \nabla_x b(x,u)(y_1-y_2 )\rangle &\leq \mu_2 |y_1 - y_2|^2,
\end{align}
for any $(x, u, y_1, y_2) \in \mathbb{R}^n \times U \times \mathbb{R}^n \times \mathbb{R}^n$.

\item[(H6)] There exists a constant $M \geq 0$ such that
\vspace{-2mm}
\begin{equation}\label{eq:BSDE H6}
|| D_x \sigma(x,u)|| := \sum^d_{i=1}||\nabla_x\sigma^i(x,u)||  \leq M,
\end{equation}
for any $(x, u) \in \mathbb{R}^n \times U$.
\end{description}

\noindent Using the transform $Y_t = e^{\beta t}\widetilde{Y}_t$ and $Z_t = e^{\beta t}\widetilde{Z}_t$, the infinite horizon BSDE (\ref{eq:FBSDE}) can be rewritten as

\begin{equation}\label{eq:bsde inf tilde}
-d\widetilde{Y}_t =  \Big( \nabla_x b(X_t,u_t) \widetilde{Y}_t + D_x \sigma(X_t,u_t) \cdot \widetilde{Z}_t + e^{-\beta t}\nabla_x f(X_t,u_t) \Big) dt - \widetilde{Z}_t dW_t, \quad \forall t \geq 0,
\end{equation}

\noindent If we show that the solution process $(\widetilde{Y}, \widetilde{Z})$ is in $\mathbf{L}^{2,\beta}_{\mathcal{F}}\left(\mathbb{R}_+;\mathbb{R}^n \times \mathbb{R}^{n \times d}\right)$ then necessarily the original solution process $(Y,Z)$ has to be in $\mathbf{L}^{2,-\beta}_{\mathcal{F}}\left(\mathbb{R}_+;\mathbb{R}^n \times \mathbb{R}^{n \times d}\right)$ since we have

\begin{equation}\label{eq:l2 weighted space}
\mathbf{E}\int^{+\infty}_0 e^{-\beta t} |Y_t|^2dt =  \mathbf{E}\int^{+\infty}_0 e^{-\beta t} |e^{\beta t}\widetilde{Y}_t|^2dt = \mathbf{E}\int^{+\infty}_0 e^{\beta t} |\widetilde{Y}_t|^2dt < + \infty,
\end{equation}
and similarly for the process $Z$.

\vspace{3mm}

\noindent The existence and uniqueness is given by Theorem 4 in Peng and Shi \cite{peng shi}. In our setting, it is formulated as follows:

\begin{theorem}\label{thm:reseni BSDE}[Existence and uniqueness] Let assumptions (H4)-(H6) hold and $\left(X^{u(\cdot)}_t, u(\cdot) \right)$ be an admissible
couple such that $X^{u(\cdot)}_t \in \mathbf{L}^{2,-\beta}_{\mathcal{F}}\left(\mathbb{R}_+;\mathbb{R}^n \right)$ and
$\nabla_x f(X_t,u_t) \in \mathbf{L}^{2,-\beta}_{\mathcal{F}}\left(\mathbb{R}_+;\mathbb{R}^n \right)$ where $\beta > 2\mu_2 + 2M^2$.
Then BSDE (\ref{eq:bsde inf tilde}) admits a unique solution $(\widetilde{Y}, \widetilde{Z}) \in \mathbf{L}^{2,\beta}_{\mathcal{F}}\left(\mathbb{R}_+;\mathbb{R}^n \times \mathbb{R}^{n \times d}\right)$.
\end{theorem}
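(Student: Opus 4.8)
The plan is to reduce the infinite-horizon BSDE \eqref{eq:bsde inf tilde} to the framework of Theorem 4 in Peng and Shi \cite{peng shi} and then verify that every hypothesis of that theorem is implied by (H4)--(H6) together with the two integrability assumptions on $X^{u(\cdot)}$ and $\nabla_x f(X,u)$. First I would fix the admissible couple $(X^{u(\cdot)}, u(\cdot))$ and regard the BSDE \eqref{eq:bsde inf tilde} as having driver
\[
g(t,\omega,y,z) = \nabla_x b(X_t,u_t)\,y + D_x\sigma(X_t,u_t)\cdot z + e^{-\beta t}\nabla_x f(X_t,u_t),
\]
which is affine in $(y,z)$ with random, time-dependent coefficients. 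The free term is $\varphi_t := e^{-\beta t}\nabla_x f(X_t,u_t)$, and the assumption $\nabla_x f(X,u)\in \mathbf{L}^{2,-\beta}_{\mathcal{F}}(\mathbb{R}_+;\mathbb{R}^n)$ is exactly the statement that $\varphi \in \mathbf{L}^{2,\beta}_{\mathcal{F}}(\mathbb{R}_+;\mathbb{R}^n)$, since $\mathbf{E}\int_0^\infty e^{\beta t}|\varphi_t|^2\,dt = \mathbf{E}\int_0^\infty e^{\beta t}e^{-2\beta t}|\nabla_x f(X_t,u_t)|^2\,dt = \mathbf{E}\int_0^\infty e^{-\beta t}|\nabla_x f(X_t,u_t)|^2\,dt < \infty$. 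This places the free term in the correct weighted space for the Peng--Shi result.

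Next I would check the structural (monotonicity and Lipschitz) conditions required by \cite{peng shi}. The linear-growth and Lipschitz-in-$z$ property of $g$ follows from (H6): $\|D_x\sigma(X_t,u_t)\cdot(z_1-z_2)\| \le M\|z_1-z_2\|$ uniformly in $t,\omega$. The monotonicity-type condition in $y$ follows from (H5): for $y_1,y_2\in\mathbb{R}^n$,
\[
\langle y_1-y_2,\ \nabla_x b(X_t,u_t)(y_1-y_2)\rangle \le \mu_2 |y_1-y_2|^2,
\]
uniformly in $t,\omega$, which is the one-sided (weak monotonicity) condition on the $y$-variable of the driver. These two bounds, combined with the constraint $\beta > 2\mu_2 + 2M^2$, are precisely the sign condition that makes the Lyapunov functional $e^{\beta t}|y|^2$ dissipative and guarantees that the a priori estimate
\[
\mathbf{E}\int_0^{+\infty} e^{\beta t}\Big[(\beta - 2\mu_2 - 2M^2 - \delta)|\widetilde{Y}^1_t - \widetilde{Y}^2_t|^2 + \tfrac12\|\widetilde{Z}^1_t - \widetilde{Z}^2_t\|^2\Big]dt \le \tfrac1\delta\,\mathbf{E}\int_0^{+\infty} e^{\beta t}|\varphi^1_t - \varphi^2_t|^2\,dt
\]
holds for any $\delta>0$ small enough that $\beta - 2\mu_2 - 2M^2 - \delta \ge 0$; this is obtained by applying It\^o's formula to $e^{\beta t}|\widetilde{Y}^1_t - \widetilde{Y}^2_t|^2$, using (H5), (H6) and the elementary inequality $2ab \le \frac1\delta a^2 + \delta b^2$. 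Uniqueness in $\mathbf{L}^{2,\beta}_{\mathcal{F}}$ is immediate from this estimate applied with $\varphi^1 = \varphi^2$.

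For existence I would invoke the construction in \cite{peng shi}: approximate $\varphi$ by the truncated free terms $\varphi^N_t = \mathbf{1}_{[0,N]}(t)\,\varphi_t$, which converge to $\varphi$ in $\mathbf{L}^{2,\beta}_{\mathcal{F}}(\mathbb{R}_+;\mathbb{R}^n)$ by dominated convergence. For each finite horizon $N$ the BSDE on $[0,N]$ with zero terminal condition at time $N$ is solvable by the classical Pardoux--Peng theory (the driver is Lipschitz in $(y,z)$ on compact time intervals), giving solutions $(\widetilde{Y}^N, \widetilde{Z}^N)$ extended by zero beyond $N$. The a priori estimate, now applied to differences $(\widetilde{Y}^N - \widetilde{Y}^{N'}, \widetilde{Z}^N - \widetilde{Z}^{N'})$ with free terms $\varphi^N - \varphi^{N'}$, shows $(\widetilde{Y}^N, \widetilde{Z}^N)$ is Cauchy in $\mathbf{L}^{2,\beta}_{\mathcal{F}}(\mathbb{R}_+;\mathbb{R}^n\times\mathbb{R}^{n\times d})$; its limit $(\widetilde{Y}, \widetilde{Z})$ is the desired solution, and passing to the limit in the integral form of the equation (using the uniform bounds from (H5)--(H6) to control the drift terms) confirms it solves \eqref{eq:bsde inf tilde}. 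Finally, the claimed membership of the original pair $(Y,Z)$ in $\mathbf{L}^{2,-\beta}_{\mathcal{F}}$ follows from the transform $Y_t = e^{\beta t}\widetilde{Y}_t$, $Z_t = e^{\beta t}\widetilde{Z}_t$ exactly as in \eqref{eq:l2 weighted space}. The only genuine subtlety — the main obstacle — is ensuring that the growth of the random coefficients $\nabla_x b(X_t,u_t)$ and $D_x\sigma(X_t,u_t)$ along the forward trajectory does not destroy the weighted integrability: here (H6) bounds $D_x\sigma$ uniformly, and although $\nabla_x b$ is controlled only through the one-sided bound (H5), that bound enters the It\^o estimate with the correct sign, so no pathwise boundedness of $\nabla_x b$ is needed; this is precisely why the weighted space $\mathbf{L}^{2,\pm\beta}_{\mathcal{F}}$ with the threshold $\beta > 2\mu_2 + 2M^2$ is the natural setting and why the Peng--Shi argument goes through verbatim.
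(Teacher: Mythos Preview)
Your proposal is correct and follows exactly the paper's approach: the paper does not give an independent proof but simply invokes Theorem~4 of Peng and Shi \cite{peng shi}, and the a~priori estimate you derive (via It\^o's formula on $e^{\beta t}|\widetilde Y^1_t-\widetilde Y^2_t|^2$ with (H5), (H6) and the Young inequality) together with the truncation $\varphi^N_t=\mathbf{1}_{[0,N]}(t)\varphi_t$ is precisely the argument underlying that reference. In fact you have spelled out the verification of the Peng--Shi hypotheses and the Cauchy-sequence construction more explicitly than the paper itself does.
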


\begin{remark}\label{rm:znamenko bety} The sign of $\beta$ depends on the sign of the factor $2\mu_2 + 2M^2$. In case of $\mu_2 <0$ one gets so called weak monotonicity condition as in Peng, Shi \cite{peng shi} allowing
also $\beta \leq 0$. As we will see later in Example 3, the controlled logistic equation naturally leads to $\mu_2 >0$ which implies $\beta >0.$
\end{remark}

\noindent Using the previous theorem and (\ref{eq:l2 weighted space}) we conclude this section by

\begin{corollary}\label{cor:reseni fbsde} Let assumptions (H1)-(H6) hold and suppose that $\nabla_x f(X_t,u_t) \in \mathbf{L}^{2,-\beta}_{\mathcal{F}}\left(\mathbb{R}_+;\mathbb{R}^n \right)$.
Then BSDE (\ref{eq:FBSDE}) admits a unique solution
$(Y, Z) \in \mathbf{L}^{2,-\beta}_{\mathcal{F}}\left(\mathbb{R}_+;\mathbb{R}^n \times \mathbb{R}^{n \times d}\right)$ for $\beta >2\mu_2 + 2M^2$.
\end{corollary}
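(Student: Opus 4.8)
The plan is to read the corollary as a bookkeeping assembly of the two existence-and-uniqueness results already proved in this section, glued together by the exponential weight-shift identity (\ref{eq:l2 weighted space}). First I would fix the admissible control $u(\cdot)$ and choose $\beta$ large enough to satisfy \emph{both} constraints that will be needed, i.e. $\beta > 2\mu_1 + 2L^2$ and $\beta > 2\mu_2 + 2M^2$ (the corollary only records the second, but the first is implicitly used to make sense of the hypothesis on $\nabla_x f(X_t,u_t)$). Under (H1)--(H3), Theorem~\ref{existence dopredne} then produces the unique forward solution $X = X^{u(\cdot)} \in \mathbf{L}^{2,-\beta}_{\mathcal{F}}(\mathbb{R}_+;\mathbb{R}^n)$, and together with the standing assumption $\nabla_x f(X_t,u_t) \in \mathbf{L}^{2,-\beta}_{\mathcal{F}}(\mathbb{R}_+;\mathbb{R}^n)$ this exhibits $(X^{u(\cdot)},u(\cdot))$ as an admissible couple in precisely the sense demanded by Theorem~\ref{thm:reseni BSDE}.

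Next I would pass to the transformed backward equation (\ref{eq:bsde inf tilde}) and invoke Theorem~\ref{thm:reseni BSDE}: since (H4)--(H6) hold and $\beta > 2\mu_2 + 2M^2$, that theorem yields a unique pair $(\widetilde{Y},\widetilde{Z}) \in \mathbf{L}^{2,\beta}_{\mathcal{F}}(\mathbb{R}_+;\mathbb{R}^n \times \mathbb{R}^{n\times d})$ solving (\ref{eq:bsde inf tilde}). Setting $Y_t := e^{\beta t}\widetilde{Y}_t$ and $Z_t := e^{\beta t}\widetilde{Z}_t$ reverses the substitution used to derive (\ref{eq:bsde inf tilde}) from (\ref{eq:FBSDE}), so $(Y,Z)$ solves the backward part of (\ref{eq:FBSDE}); and the membership $(Y,Z) \in \mathbf{L}^{2,-\beta}_{\mathcal{F}}$ is exactly the chain of equalities (\ref{eq:l2 weighted space}), where the factors $e^{-\beta t}$ and $e^{2\beta t}|\widetilde{Y}_t|^2$ combine to $e^{\beta t}|\widetilde{Y}_t|^2$ with finite expected integral, and likewise for $Z$. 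For uniqueness, two solutions of (\ref{eq:FBSDE}) in $\mathbf{L}^{2,-\beta}_{\mathcal{F}}$ transform (via the same identity (\ref{eq:l2 weighted space}), read in the opposite direction) into two solutions of (\ref{eq:bsde inf tilde}) in $\mathbf{L}^{2,\beta}_{\mathcal{F}}$, which coincide by the uniqueness clause of Theorem~\ref{thm:reseni BSDE}; uniqueness of $X$ is already contained in Theorem~\ref{existence dopredne} (equivalently Lemma~\ref{SDE apriori}).

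The only point that requires a line of verification rather than mere citation is that the driver of (\ref{eq:bsde inf tilde}), namely $(y,z)\mapsto \nabla_x b(X_t,u_t)y + D_x\sigma(X_t,u_t)\cdot z + e^{-\beta t}\nabla_x f(X_t,u_t)$, genuinely fits the framework of Theorem~\ref{thm:reseni BSDE}: it is affine in $(y,z)$ with the monotonicity constant $\mu_2$ for the $y$-part from (H5) and the bound $M$ on the $z$-coefficient from (H6), it is progressively measurable because $X$ and $u$ are, and its free term $e^{-\beta t}\nabla_x f(X_t,u_t)$ lies in $\mathbf{L}^{2,\beta}_{\mathcal{F}}$ — once more this is the identity (\ref{eq:l2 weighted space}) applied to $\nabla_x f(X_t,u_t)\in\mathbf{L}^{2,-\beta}_{\mathcal{F}}$. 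I do not expect any genuine obstacle here; the substance of the corollary lies entirely in the upstream Theorems~\ref{existence dopredne} and~\ref{thm:reseni BSDE}, and the corollary itself is the observation that the weight $-\beta$ on the original variables corresponds to the weight $+\beta$ on the tilded variables through the deterministic factor $e^{\beta t}$.
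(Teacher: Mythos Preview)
Your proposal is correct and follows exactly the route the paper intends: the corollary is obtained directly from Theorem~\ref{thm:reseni BSDE} together with the weight-shift identity (\ref{eq:l2 weighted space}), with Theorem~\ref{existence dopredne} supplying the admissible forward input. Your write-up is in fact more detailed than the paper's, which simply records the corollary as an immediate consequence of ``the previous theorem and (\ref{eq:l2 weighted space})''.
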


\begin{remark}\label{rm:oblast G} The above theorems stand valid if the assumptions (H1)-(H6) are fulfilled for all $x \in G \subset \mathbb{R}^n$, $G$ domain, provided $X_t \in G$ for all $t \geq 0$ a.s.
\end{remark}

\subsection{Stability of BSDE under approximations}
In this subsection we legitimate the approximation the BSDE by an equation with terminal zero condition.
As we have already mentioned when dealing with DCP, the terminal value $\xi$ of the solution process
$Y$ is not known. We will show that under (H1)-(H6) and natural assumptions on $\xi$ (finite second moment)
the approximation is stable, i.e. one can choose zero terminal condition or $\mathcal{F}_t$ - measurable its projections to obtain the same solution process. The following lemma connects the approach in
Darling and Pardoux \cite{darling pardoux} and in Peng and Shi \cite{peng shi}.

\begin{lemma}\label{l:stabilita aproximaci}[Stability under approximations]
Let (H1)-(H6) hold and let $\xi$ 
be an $\mathbb{R}^n$-valued $\mathcal{F}_{\infty}$ - measurable random variable with $\mathbf{E}|\xi|^2< +\infty.$
Further, for each $n \in \mathbb{N}$, let us consider two BSDE's

\begin{align}\label{eq:lemma bsde1}
-d\widetilde{Y}^n_t &= \nabla_x \mathcal{H}(X_t,u_t,\widetilde{Y}^n_t,\widetilde{Z}^n_t)dt - \widetilde{Z}^n_tdW_t,\ t \in [0,n) \nonumber\\
\widetilde{Y}^n_n &= 0,
\end{align}
\noindent and
\begin{align}\label{eq:lemma bsde2}
-d\widehat{Y}^n_t &= \nabla_x \mathcal{H}(X_t,u_t,\widehat{Y}^n_t,\widehat{Z}^n_t)dt - \widehat{Z}^n_tdW_t,\ t \in [0,n) \nonumber\\
\widehat{Y}^n_n &= \xi_n := \mathbf{E}\left[\xi|\mathcal{F}_n\right],
\end{align}

\noindent where $\nabla_x \mathcal{H}(x,u,y,z) = \nabla_x b(x,u) y + D_x \sigma(x,u) \cdot z + \nabla_x f(x,u) -\beta y.$ \\
\noindent We lay $\widetilde{Y}^n_t = 0$ and $\widehat{Y}^n_t = \xi_t := \mathbf{E}\left[\xi|\mathcal{F}_t\right], \forall t>n$. \\
\noindent Then

\begin{align}\label{eq:aproximace}
\operatorname*{lim}_{n \rightarrow +\infty}
\left[ \operatorname*{sup}_{t \geq 0} \mathbf{E} \left(e^{-\beta t}|\widetilde{Y}^n_t - \widehat{Y}^n_t|^2\right) \right.&+
\mathbf{E} \int^{+\infty}_0 \mathbf{1}_{[0,n]}(t)e^{-\beta t}|\widetilde{Y}^n_t - \widehat{Y}^n_t|^2 dt \nonumber \\
&\left.+\mathbf{E} \int^{+\infty}_0 \mathbf{1}_{[0,n]} (t) e^{-\beta t}||\widetilde{Z}^n_t - \widehat{Z}^n_t||^2 dt \right] = 0,
\end{align}

for $\beta \geq 2\mu_2 + 2M^2 >0$.
\end{lemma}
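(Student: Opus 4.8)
The plan is to study the difference processes $\delta Y^n_t := \widetilde{Y}^n_t - \widehat{Y}^n_t$ and $\delta Z^n_t := \widetilde{Z}^n_t - \widehat{Z}^n_t$ by a single exponentially weighted energy estimate. The crucial observation is that the inhomogeneous term $\nabla_x f(X_t,u_t)$ is identical in \eqref{eq:lemma bsde1} and \eqref{eq:lemma bsde2}, so it cancels, and on $[0,n)$ the pair $(\delta Y^n,\delta Z^n)$ solves the \emph{homogeneous} linear BSDE
\begin{align*}
-d(\delta Y^n_t) &= \big( \nabla_x b(X_t,u_t)\,\delta Y^n_t + D_x\sigma(X_t,u_t)\cdot\delta Z^n_t - \beta\,\delta Y^n_t \big)\,dt - \delta Z^n_t\,dW_t, \\
\delta Y^n_n &= -\,\xi_n .
\end{align*}
Well-posedness of the two finite-horizon BSDEs in the relevant $\mathbf{L}^2$ spaces is standard for monotone drivers under (H4)--(H6) (cf. Pardoux \cite{pardoux} and Peng, Shi \cite{peng shi}), so this difference equation is meaningful.

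Next I would apply the It\^o formula to $e^{-\beta t}|\delta Y^n_t|^2$ on $[t_0,n]$ for an arbitrary $t_0\in[0,n]$, integrate and take expectations; the stochastic integral is a genuine martingale after the usual localisation together with the Burkholder--Davis--Gundy inequality, hence drops out. Bounding $\langle\delta Y^n_s,\nabla_x b(X_s,u_s)\delta Y^n_s\rangle$ by (H5), the cross term $2\langle\delta Y^n_s,D_x\sigma(X_s,u_s)\cdot\delta Z^n_s\rangle$ by (H6) together with the elementary inequality $2M|a|\,\|c\|\le 2M^2|a|^2+\tfrac12\|c\|^2$, and using that conditional expectation is an $\mathbf{L}^2$-contraction so $\mathbf{E}|\xi_n|^2\le\mathbf{E}|\xi|^2$, one arrives at the uniform bound, valid for every $t_0\in[0,n]$,
\begin{align*}
\mathbf{E}\big(e^{-\beta t_0}|\delta Y^n_{t_0}|^2\big) + \mathbf{E}\int_{t_0}^{n} e^{-\beta s}\Big[(\beta-2\mu_2-2M^2)|\delta Y^n_s|^2 + \tfrac12\|\delta Z^n_s\|^2\Big]\,ds \ \le\ e^{-\beta n}\,\mathbf{E}|\xi|^2 .
\end{align*}
This is essentially the computation of Lemma \ref{SDE apriori}, and the right-hand side no longer involves the driver — which is exactly why the cancellation of $\nabla_x f$ matters.

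Since $\beta-2\mu_2-2M^2\ge 0$ and $\beta>0$, the right-hand side is independent of $t_0$ and tends to $0$ as $n\to+\infty$, and the three terms in \eqref{eq:aproximace} follow by elementary manipulations. Taking the supremum over $t_0\in[0,n]$, and noting that for $t>n$ one has $\delta Y^n_t=-\xi_t$, whence $\mathbf{E}(e^{-\beta t}|\delta Y^n_t|^2)=e^{-\beta t}\mathbf{E}|\xi_t|^2\le e^{-\beta n}\mathbf{E}|\xi|^2$, gives $\sup_{t\ge0}\mathbf{E}(e^{-\beta t}|\delta Y^n_t|^2)\le e^{-\beta n}\mathbf{E}|\xi|^2\to 0$. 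Choosing $t_0=0$ and discarding the $\delta Y^n$ term gives $\tfrac12\,\mathbf{E}\int_0^n e^{-\beta s}\|\delta Z^n_s\|^2\,ds\le e^{-\beta n}\mathbf{E}|\xi|^2\to 0$. Finally $\mathbf{E}\int_0^n e^{-\beta s}|\delta Y^n_s|^2\,ds=\int_0^n\mathbf{E}(e^{-\beta s}|\delta Y^n_s|^2)\,ds\le n\,e^{-\beta n}\mathbf{E}|\xi|^2\to 0$; using this crude bound rather than dividing by $\beta-2\mu_2-2M^2$ is what covers the borderline case $\beta=2\mu_2+2M^2$. Adding the three limits yields \eqref{eq:aproximace}.

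The only genuinely delicate points are the well-posedness of the approximating BSDEs and the justification that the local martingale in the It\^o expansion has zero expectation; both are routine under (H4)--(H6) once the integrability of $X$ and of $\nabla_x f(X,u)$ is in force, and I would dispatch them through a localising sequence of stopping times followed by Fatou's lemma and monotone convergence. The substance of the argument is the weighted a priori estimate above, together with the observation that the free term drops out of the difference equation.
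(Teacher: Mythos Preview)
Your argument is correct and follows essentially the same route as the paper: both apply It\^o's formula to $e^{-\beta t}|\widetilde{Y}^n_t-\widehat{Y}^n_t|^2$, exploit the cancellation of the inhomogeneous term $\nabla_x f$, bound via (H5)--(H6), and then split the supremum into $t\in[0,n]$ and $t>n$ using the martingale extension $\widehat{Y}^n_t=\xi_t$. Your explicit treatment of the borderline case $\beta=2\mu_2+2M^2$ via the crude bound $\mathbf{E}\int_0^n e^{-\beta s}|\delta Y^n_s|^2\,ds\le n\,e^{-\beta n}\mathbf{E}|\xi|^2$ is in fact a small improvement, since the paper's estimate (3.18) only controls this integral through the coefficient $(\beta-2\mu_2-2M^2)$, which vanishes at the boundary.
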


\begin{proof}[\bf{Proof:}]
\noindent First observe that, using Jensen's inequality and integrability of $\xi$, one gets

\begin{equation}\label{eq:terminal approx}
\mathbf{E}\left(e^{-\beta n}|\xi_n|^2 \right) = e^{-\beta n} \mathbf{E}\left(|\mathbf{E}\left(\xi|\mathcal{F}_n\right)|^2 \right) \leq
\mathbf{E}\left(e^{-\beta n}|\xi|^2 \right) \operatorname*{\rightarrow}_{n \rightarrow +\infty}0.
\end{equation}

\noindent Now, applying It\^o formula to $e^{-\beta t}|\widetilde{Y}^n_t - \widehat{Y}^n_t|^2$ one arrives at

\begin{align}\label{eq:lemma aprox ito1}
\mathbf{E}&\left(e^{-\beta t}|\widetilde{Y}^n_t - \widehat{Y}^n_t|^2 \right) + \beta \mathbf{E} \int^{n}_0 e^{-\beta s} |\widetilde{Y}^n_s - \widehat{Y}^n_s|^2ds + \mathbf{E} \int^{n}_0 e^{-\beta s} ||\widetilde{Z}^n_s - \widehat{Z}^n_s||^2ds= \mathbf{E}\left(e^{-\beta n}|\xi_n|^2 \right)
\nonumber \\
&+ 2\mathbf{E} \int^{n}_0 e^{-\beta s}\Big[ \big<\nabla_x b(X_s,u_s)(\widetilde{Y}^n_s - \widehat{Y}^n_s),\widetilde{Y}^n_s - \widehat{Y}^n_s\big>
+ \big<D_x \sigma(X_s,u_s)\cdot(\widetilde{Z}^n_s - \widehat{Z}^n_s),\widetilde{Y}^n_s - \widehat{Y}^n_s\big>  \Big]ds.
\end{align}
\noindent Now, due to (H5) and (H6), we arrive at

\begin{align}\label{eq:lemma aprox ito 2}
\mathbf{E}\left(e^{-\beta t}|\widetilde{Y}^n_t - \widehat{Y}^n_t|^2 \right) &+ (\beta - 2\mu_2 - 2M^2)\mathbf{E} \int^{n}_0 e^{-\beta s}|\widetilde{Y}^n_s - \widehat{Y}^n_s|^2 ds  \nonumber \\
&+ \frac{1}{2}\mathbf{E} \int^{n}_0 e^{-\beta s}||\widetilde{Z}^n_s - \widehat{Z}^n_s||^2 ds \leq  \mathbf{E}\left(e^{-\beta n}|\xi_n|^2 \right),\ \forall 0 \leq t \leq n.
\end{align}

\noindent Taking the $ \operatorname*{sup}_{0 \leq t \leq n}$ on both sides of (\ref{eq:lemma aprox ito 2}), we obtain

\begin{align}\label{eq:lemma aprox ito 3}
\operatorname*{sup}_{0 \leq t \leq n} \mathbf{E}\left(e^{-\beta t}|\widetilde{Y}^n_t - \widehat{Y}^n_t|^2 \right) &+ (\beta - 2\mu_2 - 2M^2)\mathbf{E} \int^{n}_0 e^{-\beta s}|\widetilde{Y}^n_s - \widehat{Y}^n_s|^2 ds  \nonumber \\ &+ \frac{1}{2}\mathbf{E} \int^{n}_0 e^{-\beta s}||\widetilde{Z}^n_s - \widehat{Z}^n_s||^2 ds \leq  \mathbf{E}\left(e^{-\beta n}|\xi_n|^2 \right) \operatorname*{\rightarrow}_{n \rightarrow +\infty}0.
\end{align}

\noindent To obtain the estimate for all $t \geq 0$ and for $\beta \geq 2\mu_2 + 2M^2$ we write

\begin{align}\label{eq:lemma aprox ito 4}
\operatorname*{sup}_{t \geq 0} \mathbf{E}&\left(e^{-\beta t}|\widetilde{Y}^n_t - \widehat{Y}^n_t|^2 \right) \leq \operatorname*{sup}_{0 \leq t \leq n} \mathbf{E}\left(e^{-\beta t}|\widetilde{Y}^n_t - \widehat{Y}^n_t|^2 \right)  + \operatorname*{sup}_{t > n} \mathbf{E}\left(e^{-\beta t}|\widetilde{Y}^n_t - \widehat{Y}^n_t|^2 \right) \nonumber \\
& \leq \mathbf{E}\left(e^{-\beta n}|\xi_n|^2 \right) +  \operatorname*{sup}_{t > n} \mathbf{E}\left(e^{-\beta t}|\xi_t|^2 \right) \operatorname*{\rightarrow}_{n \rightarrow +\infty}0.
\end{align}

\noindent The last term is due to definition of $\widetilde{Y}^n$ and $\widehat{Y}^n$ for $t \geq 0$ and due to (\ref{eq:terminal approx}). The convergence of the integral terms in
(\ref{eq:aproximace}) follows easily.
\end{proof}

\begin{remark} In the above Lemma \ref{l:stabilita aproximaci}, we could impose even more general conditions on $\xi$ ensuring $\mathbf{E}\left(e^{-\beta n}|\xi_n|^2 \right) \rightarrow 0$ as $ n \rightarrow +\infty.$
\end{remark}

\section{Sufficient stochastic maximum principle for the DCP}
In this section, we return to the discounted control problem and prove the corresponding sufficient maximum
principle. First, we employ the approach using the results from the previous section. Second, we apply results from
\O ksendal et al. \cite{oksendal priklad} to our definition of Hamiltonian and prove the associated DCP using
transversality condition. Finally, the connection between the two methods is shown.

\begin{theorem}\label{thm:postac PM1}[Sufficient stochastic maximum principle]\label{theorem SMP asympt}
{\it Let (H1) - (H6) hold and $\beta > \operatorname{max}\left\{2\mu_1 + 2L^2, 2\mu_2 + 2M^2 \right\}$.
Assume moreover that $\nabla_x f(X_t,u_t) \in \mathbf{L}^{2,-\beta}_{\mathcal{F}}\left(\mathbb{R}_+;\mathbb{R}^n \right)$ for any admissible couple
$(X,u)$ from Theorem \ref{existence dopredne}.
Further, let $\widehat{u}(\cdot) \in \mathcal{U}_{ad}$ and $\widehat{X}$ be the associated controlled diffusion process. Let us suppose that there exists a solution $(\widehat{Y},\widehat{Z})$ to the associated BSDE (\ref{eq:FBSDE}) such that

\begin{enumerate}
\item $\mathcal{H}(\widehat{X}_t,\widehat{u}_t,\widehat{Y}_t,\widehat{Z}_t) = \mathop{max}\limits_{u \in U} \mathcal{H}(\widehat{X}_t,u,\widehat{Y}_t,\widehat{Z}_t),
\ \mathbf{P}\otimes dt -{\rm a.e.},$

\item $(x,u) \rightarrow \mathcal{H}(x,u,\widehat{Y}_t,\widehat{Z}_t)$ is a concave function,\ $\mathbf{P}\otimes dt -{\rm a.e.}$
\end{enumerate}

\vspace{3mm}

Then $\widehat{u}(\cdot) = u^*(\cdot)$, i.e. $\widehat{u}(\cdot)$ is the optimal control strategy to the DCP.}
\end{theorem}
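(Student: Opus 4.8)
Fix an arbitrary competitor $u(\cdot)\in\mathcal{U}_{ad}$ with associated state $X=X^{x,u(\cdot)}$; the aim is $J(\widehat u(\cdot))\ge J(u(\cdot))$. By Theorem \ref{existence dopredne} both $\widehat X$ and $X$ lie in $\mathbf{L}^{2,-\beta}_{\mathcal{F}}(\mathbb{R}_+;\mathbb{R}^n)$, hence so does $\widehat X-X$, and by Corollary \ref{cor:reseni fbsde} we have $(\widehat Y,\widehat Z)\in\mathbf{L}^{2,-\beta}_{\mathcal{F}}(\mathbb{R}_+;\mathbb{R}^n\times\mathbb{R}^{n\times d})$; both ranges of $\beta$ are available because $\beta>\max\{2\mu_1+2L^2,\,2\mu_2+2M^2\}$. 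The plan is to run the classical duality computation behind sufficient maximum principles, in its discounted infinite-horizon form, where the ``boundary term at infinity'' is tamed by the weighted spaces of Section 3 instead of a prescribed terminal condition.

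First I would apply It\^o's formula to $t\mapsto e^{-\beta t}\langle\widehat Y_t,\widehat X_t-X_t\rangle$, using the forward equations (\ref{eq:difuse inf new}) for $\widehat X,X$ and the adjoint BSDE (\ref{eq:FBSDE}), and the initial value $\widehat X_0-X_0=0$. The term $-\beta\langle x,y\rangle$ in (\ref{eq:hamiltonian}) is exactly what makes the $dt$-drift telescope: substituting $\nabla_x b(\widehat X_t,\widehat u_t)\widehat Y_t+D_x\sigma(\widehat X_t,\widehat u_t)\cdot\widehat Z_t+\nabla_x f(\widehat X_t,\widehat u_t)-\beta\widehat Y_t=\nabla_x\mathcal{H}(\widehat X_t,\widehat u_t,\widehat Y_t,\widehat Z_t)$ together with the identity $\mathcal{H}(\widehat X_t,\widehat u_t,\widehat Y_t,\widehat Z_t)-\mathcal{H}(X_t,u_t,\widehat Y_t,\widehat Z_t)=\langle b(\widehat X_t,\widehat u_t)-b(X_t,u_t),\widehat Y_t\rangle+Tr[(\sigma(\widehat X_t,\widehat u_t)-\sigma(X_t,u_t))'\widehat Z_t]+f(\widehat X_t,\widehat u_t)-f(X_t,u_t)-\beta\langle\widehat X_t-X_t,\widehat Y_t\rangle$, and removing the stochastic integrals by the usual localization (stop when $|\widehat X_t-X_t|+|\widehat Y_t|+\int_0^t\|\widehat Z_s\|^2\,ds\ge m$, then send $m\to\infty$ using the integrability just recalled), one reaches, for every $T>0$,
\begin{align}\label{eq:dcp-duality}
\mathbf{E}\int_0^T e^{-\beta t}\big(f(\widehat X_t,\widehat u_t)-f(X_t,u_t)\big)\,dt
= \mathbf{E}\int_0^T e^{-\beta t}\,I_t\,dt-\mathbf{E}\big[e^{-\beta T}\langle\widehat Y_T,\widehat X_T-X_T\rangle\big],
\end{align}
where $I_t:=\mathcal{H}(\widehat X_t,\widehat u_t,\widehat Y_t,\widehat Z_t)-\mathcal{H}(X_t,u_t,\widehat Y_t,\widehat Z_t)-\langle\nabla_x\mathcal{H}(\widehat X_t,\widehat u_t,\widehat Y_t,\widehat Z_t),\widehat X_t-X_t\rangle$.

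Next I would show $I_t\ge0$ $\mathbf{P}\otimes dt$-a.e.\ from hypotheses (1)--(2). Fix $(t,\omega)$ and put $\psi(x):=\sup_{u\in U}\mathcal{H}(x,u,\widehat Y_t,\widehat Z_t)$. Since $(x,u)\mapsto\mathcal{H}(x,u,\widehat Y_t,\widehat Z_t)$ is concave and $U$ is convex, $\psi$ is concave; by (1), $\psi(\widehat X_t)=\mathcal{H}(\widehat X_t,\widehat u_t,\widehat Y_t,\widehat Z_t)$, while $\psi(x)\ge\mathcal{H}(x,\widehat u_t,\widehat Y_t,\widehat Z_t)$ for all $x$, with equality at $\widehat X_t$. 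As $x\mapsto\mathcal{H}(x,\widehat u_t,\widehat Y_t,\widehat Z_t)$ is differentiable by (H4), every supergradient of $\psi$ at $\widehat X_t$ is also a supergradient of $\mathcal{H}(\cdot,\widehat u_t,\widehat Y_t,\widehat Z_t)$ there, hence equals $\nabla_x\mathcal{H}(\widehat X_t,\widehat u_t,\widehat Y_t,\widehat Z_t)$; thus $\psi$ is differentiable at $\widehat X_t$ with that gradient, and concavity gives $\psi(X_t)-\psi(\widehat X_t)\le\langle\nabla_x\mathcal{H}(\widehat X_t,\widehat u_t,\widehat Y_t,\widehat Z_t),X_t-\widehat X_t\rangle$. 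Combining with $\mathcal{H}(X_t,u_t,\widehat Y_t,\widehat Z_t)\le\psi(X_t)$ yields $I_t\ge0$. (If one additionally assumes $\mathcal{H}\in C^{1}$ in $u$, this is just the concavity gradient inequality together with the first-order optimality $\langle\nabla_u\mathcal{H}(\widehat X_t,\widehat u_t,\widehat Y_t,\widehat Z_t),u-\widehat u_t\rangle\le0$, $u\in U$, coming from (1).) Hence the first term on the right of (\ref{eq:dcp-duality}) is $\ge0$, so $\mathbf{E}\int_0^T e^{-\beta t}(f(\widehat X_t,\widehat u_t)-f(X_t,u_t))\,dt\ge-\mathbf{E}[e^{-\beta T}\langle\widehat Y_T,\widehat X_T-X_T\rangle]$ for every $T>0$.

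The hard part is the passage $T\to\infty$, i.e.\ killing the transversality term — this is where the infinite horizon genuinely bites, and where the choice of solution space pays off. By Tonelli and Cauchy--Schwarz,
\begin{align*}
\mathbf{E}\int_0^{\infty} e^{-\beta t}|\langle\widehat Y_t,\widehat X_t-X_t\rangle|\,dt
\le\Big(\mathbf{E}\int_0^{\infty} e^{-\beta t}|\widehat Y_t|^2\,dt\Big)^{1/2}\Big(\mathbf{E}\int_0^{\infty} e^{-\beta t}|\widehat X_t-X_t|^2\,dt\Big)^{1/2}<+\infty,
\end{align*}
because $\widehat Y$ and $\widehat X-X$ lie in $\mathbf{L}^{2,-\beta}_{\mathcal{F}}$; therefore $t\mapsto\mathbf{E}[e^{-\beta t}|\langle\widehat Y_t,\widehat X_t-X_t\rangle|]$ is integrable on $\mathbb{R}_+$, so its $\liminf$ at infinity vanishes, and along a sequence $T_k\uparrow\infty$ realizing it one gets $\mathbf{E}[e^{-\beta T_k}\langle\widehat Y_{T_k},\widehat X_{T_k}-X_{T_k}\rangle]\to0$. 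Taking $T=T_k$ in the inequality above and letting $k\to\infty$ — the left-hand side tending to $J(\widehat u(\cdot))-J(u(\cdot))$ by the (absolute) convergence of $J$ over $\mathcal{U}_{ad}$ — gives $J(\widehat u(\cdot))\ge J(u(\cdot))$; since $u(\cdot)\in\mathcal{U}_{ad}$ was arbitrary, $\widehat u(\cdot)=u^*(\cdot)$. Besides this limit, the only steps needing genuine care are the localization behind (\ref{eq:dcp-duality}) and the mild regularity/boundedness in $u$ implicitly ensuring that $\psi$ is finite and the maximum in (1) is attained.
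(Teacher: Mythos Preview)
Your proof is correct and follows essentially the same route as the paper: apply It\^o's formula to $e^{-\beta t}\langle\widehat Y_t,\widehat X_t-X_t\rangle$, use the definition of $\mathcal H$ to express the finite-horizon difference $\mathbf E\int_0^T e^{-\beta t}(f(\widehat X_t,\widehat u_t)-f(X_t,u_t))\,dt$ in terms of the quantity $I_t$ and the boundary term, exploit the weighted $\mathbf L^{2,-\beta}_{\mathcal F}$-membership of $\widehat X-X$ and $\widehat Y$ to find a sequence $T_k\uparrow\infty$ along which the boundary term vanishes, and conclude via the concavity of $\mathcal H$ in $(x,u)$ together with the maximality condition. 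Your treatment of the concavity step (introducing $\psi(x)=\sup_{u\in U}\mathcal H(x,u,\widehat Y_t,\widehat Z_t)$ and identifying its supergradient at $\widehat X_t$) is a bit more explicit than the paper's, which simply invokes concavity in $(x,u)$; this has the advantage of not relying on differentiability of $\mathcal H$ in $u$, but otherwise the arguments coincide.
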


\begin{proof}[\bf{Proof:}] Take an arbitrary $u(\cdot) \in \mathcal{U}_{ad}$ and examine the difference $J(\widehat{u}(\cdot)) - J(u(\cdot))$. The goal is to show that this quantity is nonnegative.
Using the definition of $J(u(\cdot))$ and $\mathcal{H}$ we have
\begin{align}\label{eq:fce upravy}
J(&\widehat{u}(\cdot)) - J(u(\cdot)) = \mathbf{E} \int^{+\infty}_0 e^{-\beta t}\left(f(\widehat{X}_t,\widehat{u}_t) - f(X_t,u_t)\right)dt  \nonumber \\
&=\mathbf{E} \int^{+\infty}_0 e^{-\beta t}\left(\mathcal{H}(\widehat{X}_t,\widehat{u}_t,\widehat{Y}_t,\widehat{Z}_t) - \mathcal{H}(X_t,u_t,\widehat{Y}_t,\widehat{Z}_t) + \big< b(X_t,u_t)-b(\widehat{X}_t,\widehat{u}_t),\widehat{Y}_t \big> \right. \nonumber \\
&\left.\quad + Tr\left\{\big(\sigma'(X_t,u_t)-\sigma'(\widehat{X}_t,\widehat{u}_t)\big)\widehat{Z}_t\right\} + \beta \big<\widehat{X}_t - X_t ,\widehat{Y}_t\big> \right)dt .
\end{align}

\noindent Clearly, the r.h.s. of (\ref{eq:fce upravy}) is finite hence we obtain

\begin{equation}\label{eq:funkcional limita 1}
\mathbf{E} \int^{+\infty}_0 \mathcal{I}_t dt = \lim_{T \to +\infty} \mathbf{E} \int^{T}_0 \mathcal{I}_t dt,
\end{equation}

\noindent where $\mathcal{I}_t$ is the integrand of (\ref{eq:fce upravy}). \\
Further, since (H1)-(H6) hold we know by Corollary \ref{cor:reseni fbsde} that each of the three solution processes $X,\widehat{X},\widehat{Y}$ belongs to $\mathbf{L}^{2,-\beta}_{\mathcal{F}}\left(\mathbb{R}_+;\mathbb{R}^n \right)$ and therefore there exists a sequence $\left(T_n\right)_{n \in \mathbb{N}},\ T_n \nearrow +\infty$ as $n \rightarrow +\infty$ so that

\begin{equation}\label{eq:FBSDE a asymptotika}
\left|\mathbf{E} \left( e^{-\beta T_n}\big<\widehat{X}_{T_n} - X_{T_n},\widehat{Y}_{T_n}\big>\right) \right| \leq
\frac{1}{2}\mathbf{E}\left( e^{-\beta {T_n}}|\widehat{X}_{T_n} - X_{T_n}|^2 \right) + \frac{1}{2} \mathbf{E}\left( e^{-\beta {T_n}}|\widehat{Y}_{T_n}|^2\right) \operatorname*{\rightarrow}_{n \rightarrow +\infty}0,
\end{equation}

\noindent where we have applied Cauchy-Schwarz inequality and the Young inequality $2ab \leq \frac{1}{2}a^2 + \frac{1}{2}b^2$, $a,b \in \mathbb{R}$.
\noindent By (\ref{eq:funkcional limita 1}) and (\ref{eq:FBSDE a asymptotika}) we have that

\begin{equation}\label{eq:funkcional limita 2}
J(\widehat{u}(\cdot)) - J(u(\cdot)) = \lim_{n \to +\infty} \mathbf{E}\left[ \int^{T_n}_0 \mathcal{I}_t dt + e^{-\beta T_n}\big<\widehat{X}_{T_n} - X_{T_n},\widehat{Y}_{T_n}\big> \right].
\end{equation}

\noindent Now, applying It\^o formula to the last term in the bracket and taking $\mathbf{E}(\cdot)$ we arrive to

\begin{align}\label{eq:Ito na smiseny clen}
\mathbf{E}&\left[ e^{-\beta T_n}\big<\widehat{X}_{T_n} - X_{T_n},\widehat{Y}_{T_n}\big> \right] = \mathbf{E} \int^{T_n}_0 e^{-\beta t} \left( \big< b(\widehat{X}_t,\widehat{u}_t) - b(X_t,u_t),\widehat{Y}_t \big> \right. \nonumber \\
&\left.\quad + Tr\left\{\big(\sigma'(\widehat{X}_t,\widehat{u}_t) - \sigma'(X_t,u_t)\big)\widehat{Z}_t\right\} - \beta \big<\widehat{X}_t - X_t ,\widehat{Y}_t\big> \right. \nonumber \\
&\left.\quad \quad \quad \quad \quad \quad \quad \quad \quad \quad \quad \quad -\big<\widehat{X}_t - X_t, \nabla_x\mathcal{H}(\widehat{X}_t,\widehat{u}_t,\widehat{Y}_t,\widehat{Z}_t )\big>\right)dt.
\end{align}

\noindent In view of the equality (\ref{eq:funkcional limita 2}), we finally arrive at

\begin{align}\label{eq:SPM end}
J(\widehat{u}(\cdot)) - J(u(\cdot))= \operatorname*{lim}_{n \rightarrow +\infty}\mathbf{E} \int^{T_n}_0
&e^{-\beta t} \left(\mathcal{H}(\widehat{X}_t,\widehat{u}_t,\widehat{Y}_t,\widehat{Z}_t) - \mathcal{H}(X_t,u_t,\widehat{Y}_t,\widehat{Z}_t) \nonumber \right. \\
& \left. -\big<\widehat{X}_t - X_t, \nabla_x\mathcal{H}(\widehat{X}_t,\widehat{u}_t,\widehat{Y}_t,\widehat{Z}_t )\big> \right)dt.
\end{align}

\noindent By the concavity of $\mathcal{H}$ in $(x,u)$, we have that

\begin{equation}\label{eq:1. clen 1}
\mathcal{H}(\widehat{X}_t,\widehat{u}_t,\widehat{Y}_t,\widehat{Z}_t) - \mathcal{H}(X_t,u_t,\widehat{Y}_t,\widehat{Z}_t) -
\big<\widehat{X}_t - X_t, \nabla_x\mathcal{H}(\widehat{X}_t,\widehat{u}_t,\widehat{Y}_t,\widehat{Z}_t )\big> \geq 0.
\end{equation}

\noindent Therefore, we deduce that

$$J(\widehat{u}(\cdot)) - J(u(\cdot)) \geq 0, \ \ \forall u(\cdot) \in \mathcal{U}_{ad},$$

\noindent which proves that $\widehat{u}(\cdot)$ is indeed the optimal control.
\end{proof}

\noindent Now, we provide a similar version of sufficient maximum principle proved in \cite{oksendal priklad}, Theorem 4.1 using the so called transversality condition (TVC).
The fact which may be inconvenient for the controller is that the TVC has to be verified (theoretically)
for every admissible control $u(\cdot)$. Moreover, this criterion, in general, cannot be explicitly verified in terms of the
coefficients $b, \sigma, f$ and $\beta$ of the DCP. On the other hand, TVC has an economical interpretation for the DCP, namely it expresses
the additional benefit of one unit of good for time increasing to infinity.

\begin{theorem}\label{postac PM2}[Sufficient stochastic maximum principle]
 Let $\widehat{u}(\cdot) \in \mathcal{U}_{ad}$ and $\widehat{X}$ be the associated controlled diffusion process. Let us suppose that there exists a solution $(\widehat{Y},\widehat{Z})$ to the associated BSDE (\ref{eq:FBSDE}) such that

\begin{enumerate}
\item $\mathcal{H}(\widehat{X}_t,\widehat{u}_t,\widehat{Y}_t,\widehat{Z}_t) = \mathop{max}\limits_{u \in U} \mathcal{H}(\widehat{X}_t,u,\widehat{Y}_t,\widehat{Z}_t),
\quad \mathbf{P}\otimes dt -{\rm a.e.},$

\item $(x,u) \rightarrow \mathcal{H}(x,u,\widehat{Y}_t,\widehat{Z}_t)$ is a concave function, $\quad \mathbf{P}\otimes dt -{\rm a.e.}$,

\item the transversality condition (TVC)

\begin{equation}\label{eq:tvc}
\operatorname*{\overline{lim}}_{t \rightarrow +\infty} \mathbf{E} \left(e^{-\beta t}\big<\widehat{X}_t - X_t,\widehat{Y}_t\big> \right) \leq 0,
\end{equation}
holds for every $X = X^{u(\cdot)}, u(\cdot)\in \mathcal{U}_{ad}$.
\end{enumerate}

Then $\widehat{u}(\cdot) = u^*(\cdot)$, i.e. $\widehat{u}(\cdot)$ is the optimal control strategy to the DCP.
\end{theorem}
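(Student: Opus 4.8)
The plan is to follow the proof of Theorem~\ref{thm:postac PM1} essentially verbatim, the only difference being that the extraction of the special sequence $(T_n)$ is replaced by a direct appeal to the transversality condition, in the spirit of \cite{oksendal priklad}, Theorem~4.1. First I would fix an arbitrary $u(\cdot)\in\mathcal{U}_{ad}$ with associated state process $X=X^{u(\cdot)}$ and, exactly as in (\ref{eq:fce upravy}), rewrite the difference $J(\widehat{u}(\cdot))-J(u(\cdot))$ through $\mathcal{H}$ as $\mathbf{E}\int_0^{+\infty}\mathcal{I}_t\,dt$, where $\mathcal{I}_t$ denotes the integrand in (\ref{eq:fce upravy}). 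Since $J$ converges for every admissible control this quantity is finite, so $\mathbf{E}\int_0^{+\infty}\mathcal{I}_t\,dt=\lim_{T\to+\infty}\mathbf{E}\int_0^{T}\mathcal{I}_t\,dt$.

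Next I would apply It\^o's formula to $e^{-\beta t}\langle\widehat{X}_t-X_t,\widehat{Y}_t\rangle$ on a \emph{generic} finite horizon $[0,T]$ and take expectations; this reproduces identity (\ref{eq:Ito na smiseny clen}) with $T$ in place of $T_n$, and here no subsequence is needed. Substituting it into $\mathbf{E}\int_0^T\mathcal{I}_t\,dt$, the $b$-, trace- and $\beta$-terms cancel pairwise and one is left with
\begin{align*}
\mathbf{E}\int_0^T\mathcal{I}_t\,dt &=\mathbf{E}\int_0^T e^{-\beta t}\Big(\mathcal{H}(\widehat{X}_t,\widehat{u}_t,\widehat{Y}_t,\widehat{Z}_t)-\mathcal{H}(X_t,u_t,\widehat{Y}_t,\widehat{Z}_t)-\langle\widehat{X}_t-X_t,\nabla_x\mathcal{H}(\widehat{X}_t,\widehat{u}_t,\widehat{Y}_t,\widehat{Z}_t)\rangle\Big)dt\\
&\quad-\mathbf{E}\big[e^{-\beta T}\langle\widehat{X}_T-X_T,\widehat{Y}_T\rangle\big].
\end{align*}
By conditions (1) and (2) — the maximality of $\widehat{u}_t$ over the convex set $U$ together with concavity of $(x,u)\mapsto\mathcal{H}(x,u,\widehat{Y}_t,\widehat{Z}_t)$ — the integrand of the first expectation on the right-hand side is nonnegative $\mathbf{P}\otimes dt$-a.e., precisely as in (\ref{eq:1. clen 1}). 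Hence $\mathbf{E}\int_0^T\mathcal{I}_t\,dt\geq-\mathbf{E}\big[e^{-\beta T}\langle\widehat{X}_T-X_T,\widehat{Y}_T\rangle\big]$ for \emph{every} $T>0$, so that
\[
J(\widehat{u}(\cdot))-J(u(\cdot))=\lim_{T\to+\infty}\mathbf{E}\int_0^T\mathcal{I}_t\,dt\ \geq\ -\operatorname*{\overline{lim}}_{T\to+\infty}\mathbf{E}\big(e^{-\beta T}\langle\widehat{X}_T-X_T,\widehat{Y}_T\rangle\big)\ \geq\ 0
\]
(the first inequality since a limit equals its $\liminf$ and $\liminf(-a_T)=-\operatorname*{\overline{lim}} a_T$, the second by the TVC (3)). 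Since $u(\cdot)$ was arbitrary, $\widehat{u}(\cdot)=u^*(\cdot)$.

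The only genuinely new point with respect to Theorem~\ref{thm:postac PM1} is the passage to the limit: there one used asymptotic integrability in $\mathbf{L}^{2,-\beta}_{\mathcal{F}}$ to find a sequence along which the boundary term vanishes, whereas here the extra integral term is nonnegative for \emph{all} $T$, so the boundary term is bounded below uniformly in $T$ and the TVC may be invoked without any such integrability (which is why this version is stated without (H1)--(H6)). I expect the main obstacle to be just bookkeeping: one should check that the stochastic integral produced by It\^o's formula on $[0,T]$ is a true martingale (or handle it through a localising sequence of stopping times together with Fatou's lemma), and that each of the terms cancelled above is $\mathbf{P}\otimes dt$-integrable on $[0,T]\times\Omega$, so that the rearrangement leading to the displayed identity is legitimate.
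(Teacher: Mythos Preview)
Your proposal is correct and matches the paper's approach: the paper does not give a self-contained proof of this theorem but merely states that it is the version from \cite{oksendal priklad}, Theorem~4.1, and (in a suppressed sketch) that the argument follows Theorem~\ref{thm:postac PM1} with (\ref{eq:SPM end}) replaced by the corresponding inequality coming from the TVC. Your write-up is in fact more careful than what the paper indicates, in particular your handling of the $\liminf/\limsup$ passage and your remark that the nonnegativity of the Hamiltonian term for \emph{every} $T$ is what allows dispensing with the integrability assumptions (H1)--(H6).
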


\begin{remark}\label{rm:suff condition tvc }[Sufficient condition for TVC]
We will show that the conditions (H1)-(H6) immediately imply the TVC, which provides an easy-to-verify approach for the controller.\\

\noindent Indeed, we have shown that under (H1)-(H6), the solution processes to the FBSDE (\ref{eq:FBSDE})
\begin{align*}
(X,Y,Z) &= (X^{x,u(\cdot)},Y^{x,u(\cdot),X},Z^{x,u(\cdot),X}) \ \text{and} \\
(\widehat{X},\widehat{Y},\widehat{Z}) &= (\widehat{X}^{x,\widehat{u}(\cdot)},\widehat{Y}^{x,\widehat{u}(\cdot),\widehat{X}},\widehat{Z}^{x,\widehat{u}(\cdot),\widehat{X}}),
\end{align*}
are both in $\mathbf{L}^{2,-\beta}_{\mathcal{F}}\left(\mathbb{R}_+;\mathbb{R}^n \times \mathbb{R}^n \times \mathbb{R}^{n \times d} \right)$ for
$\beta > \operatorname{max}\left\{2\mu_1 + 2L^2, 2\mu_2 + 2M^2 \right\}.$ \\

\noindent Therefore, there exists a sequence $\left(T_n\right)_{n \in \mathbb{N}},\ T_n \nearrow +\infty$ as $n \rightarrow +\infty$ so that

\begin{equation}\label{eq:FBSDE a TVC}\nonumber
\left|\mathbf{E} \left( e^{-\beta T_n}\big<\widehat{X}_{T_n} - X_{T_n},\widehat{Y}_{T_n}\big>\right) \right| \leq
\frac{1}{2}\mathbf{E}\left( e^{-\beta {T_n}}|\widehat{X}_{T_n} - X_{T_n}|^2 \right) + \frac{1}{2} \mathbf{E}\left( e^{-\beta {T_n}}|\widehat{Y}_{T_n}|^2\right) \operatorname*{\rightarrow}_{n \rightarrow +\infty}0,
\end{equation}

\noindent which implies the TVC (\ref{eq:tvc}).

\end{remark}

\begin{remark}\label{rem time depend}
All results of section 4 holds also for time dependent coefficients $b: \mathbb{R}_+ \times \mathbb{R}^n \times U \rightarrow \mathbb{R}^n,
\sigma: \mathbb{R}_+ \times \mathbb{R}^n \times U \rightarrow \mathbb{R}^{n \times d}, f: \mathbb{R}_+ \times \mathbb{R}^n \times U \rightarrow \mathbb{R}$ under natural assumptions.
\end{remark}

\begin{remark}\label{rm:hamiltonian system}[Hamiltonian system] Throughout the paper we assume that the Hamiltonian takes the form (\ref{eq:hamiltonian}).
Nevertheless, one could easily verify that Theorem \ref{thm:postac PM1} holds also with the Hamiltonian function $ \mathcal{H}$ replaced by a different Hamiltonian function $H$ defined as
\begin{equation}\label{eq:hamiltonian 2}
H(x,u,y,z) = \big< b(x,u),y\big> + Tr(\sigma(x,u)'z) + f(x,u) = \mathcal{H}(x,u,y,z) + \beta \big<x,y\big>
\end{equation}

\noindent which enables us to rewrite the Forward-Backward Hamiltonian system as

\begin{align}\label{eq:Hamilt FBSDE}\nonumber
dX_t &= \nabla_y H(X_t,u_t, Y_t, Z_t)dt +\nabla_z H(X_t,u_t, Y_t, Z_t) dW_t, \quad \forall t \geq 0 \ {\rm a.s.}\\ \nonumber
  X_0 &= x \in \mathbb{R}^n, \\ \nonumber
-dY_t &= \Big( \nabla_x H(X_t,u_t, Y_t, Z_t) - \beta Y_t \Big)dt - Z_t dW_t, \quad \forall t \geq 0, \ {\rm a.s.} \\
\end{align}
\noindent Again, there is a correction term one has take into consideration, namely $- \beta Y_t$ in the driver of the BSDE.
\end{remark}

\section{Examples}
In this section we provide two illustrative examples of discounted control problems with well known solution. It will be shown that the approach using our maximum principle lead to the solution as well.

\subsection{Example 1 - Production planning problem}

The setting of the problem is taken from Borkar \cite{borkar} and it is in fact the classical LQ problem whose solution
is well known from e.g. dynamic programming, see \O ksendal \cite{oksendalSDE}, chapter 11. In the feedback Markov setting, the optimal control is obtained in the form of the observed (Markov) diffusion state process $X_t$ plugged into the solution of the associated Riccati ODE. \\

\noindent Let us consider a factory producing a single item. Let $X(\cdot)$ denote its inventory level as a function of time and $u(\cdot) \geq 0$ denote the production rate. $\eta$ denotes the constant demand rate and $x_1,u_1$ resp. the factory-optimal inventory level and production rate.\\
\noindent The inventory process is modeled as the controlled diffusion

\begin{equation}\label{eq: priklad 1 difuse}
dX_t =\big(u_t - \eta \big)dt + \sigma dW_t, \quad \forall t\geq 0,
\end{equation}

\noindent where $\sigma$ is a constant. The aim is to minimize over non-anticipative $u(\cdot)$ the discounted cost

\begin{equation}\label{eq: priklad 1funkcional}
J(u(\cdot)) = \mathbf{E} \int^{+\infty}_0 e^{-\beta t}\Big( c\big(u_t -u_1\big)^2+ h\big(X_t - x_1\big)^2 \Big)dt,
\end{equation}

\noindent where $c,h>0$ are known coefficients for the production cost and the inventory holding cost, resp. and $\beta >0$ is a discount factor.\\

\noindent The minimization task is converted to maximization by taking $-J(u(\cdot))$ as the functional.

\noindent Firstly, it is easy to see that (H1)-(H6) hold for $\mu_1 = \mu_2 = L = M = 0$ and therefore the DCP is meaningful for all $\beta \geq 0.$
The Hamiltonian of this control problem is

\begin{equation}\label{eq:priklad 1 hamiltonian}\nonumber
\mathcal{H}(x,u,y,z) = (u-\eta)y + \sigma z - c(u-u_1)^2 - h(x-x_1)^2 - \beta x y
\end{equation}

\noindent which is a concave function in $(x,u)$. The driver of the backward adjoint equation is obtained as the derivative of $\mathcal{H}$ w.r.t $x$, i.e.

\begin{equation}\label{eq:priklad 1 driver}\nonumber
h(x,u,y,z) = \frac{\partial}{\partial x}\mathcal{H}(x,u,y,z) = -2h(x-x_1) - \beta y
\end{equation}

\noindent and the associated BSDE is

\begin{equation}\label{eq:priklad 1 hamilt bsde inf}
-dY_t = \left( -2h(X_t-x_1) - \beta Y_t \right)dt - Z_tdW_t, \quad \forall t \geq 0, \ {\rm a.s.}
\end{equation}

\noindent To find the extremal (maximal) point of $\mathcal{H}$ we lay

\begin{equation}\label{eq:priklad 1 extrem}\nonumber
\frac{\partial}{\partial u}\mathcal{H}(x,u,y,z) = y - 2c(u-u_1) =0
\end{equation}

\noindent which leads to

\begin{equation}\label{eq:priklad 1 optimalni rizeni}
\widehat{u}_t = \frac{\widehat{Y}_t}{2c} + u_1.
\end{equation}

\noindent Further, assuming the solution to BSDE (\ref{eq:priklad 1 hamilt bsde inf}) in the feedback form

\begin{equation}\label{eq:priklad 1 zpetna vazba}
Y_t = \varphi_t X_t + \psi_t, \quad \forall t \geq 0 \ {\rm a.s.}
\end{equation}

\noindent for some deterministic functions $\varphi, \psi$ in $\mathcal{C}^1$ one can arrive to Riccati system of ODE's similarly as in
\cite{oksendal}.

\subsection{Example 2 - Optimal consumption rate}

The problem of optimal consumption rate is taken from \O ksendal \cite{oksendal priklad} where it is solved using a different definition of Hamiltonian. In our setting, we can, in fact, follow exactly the solution procedure step by step showing the same result. The main difference is that using our approach, one immediately knows whether the transversality condition holds just by examining the coefficients.

\noindent Consider an agent whose wealth evolves according to the following controlled bilinear SDE in $\mathbb{R}$

\begin{align}\label{eq: priklad difuse}
dX_t &=X_t\big(\mu - u_t \big)dt + \sigma X_t dW_t, \quad \forall t\geq 0,\nonumber \\
X_0 &= x_0 >0,
\end{align}

\noindent where $u(\cdot)$ is the consumption rate process, $\mu$ and $\sigma$ are some real constants (unlike in \cite{oksendal priklad} where they are assumed time dependent and bounded). The aim is to maximize over all  strictly positive controls $u(\cdot)$ bounded by some constant $K>0$ the discounted cost functional

\begin{equation}\label{eq: priklad funkcional}
J(u(\cdot)) = \mathbf{E} \int^{+\infty}_0 e^{-\beta t}\operatorname*{ln}\Big( u_t X_t\Big)dt,
\end{equation}

\noindent First we see that (H1)-(H6) hold for $\mu_1 = \mu_2 = \mu,\ M = L = |\sigma|$.
Further, one has to verify that the following condition holds

\begin{equation}\label{eq:predpoklad oksendal}
\mathbf{E}\int^{+\infty}_0 e^{-\beta t}||\nabla_x f(X_t,u_t)||^2dt <+\infty.
\end{equation}

\noindent To see it, realize that the solution to (\ref{eq: priklad difuse}) is a geometric Brownian motion

\begin{equation}\label{eq:predpoklad oksendal 2}
X_t = x_0 \operatorname*{exp}\left(\sigma W_t - \frac{1}{2} \sigma^2 t + \int^t_0 \left( \mu - u_s\right) ds \right),
\end{equation}
so the condition reads

\begin{align}\label{eq:predpoklad oksendal 3}
\mathbf{E}\int^{+\infty}_0 e^{-\beta t}\frac{1}{|X_t|^2}dt <+\infty.
\end{align}

\noindent Since we have

\begin{align}\label{eq:predpoklad oksendal 4}
\mathbf{E}\left[\frac{1}{|X_t|^2}\right]= 
x^{-2}_0 \mathbf{E} \Big[ \operatorname*{exp} \Big. & \Big. \left(\sigma^2 t  -2 \int^t_0 \left( \mu - u_s\right) ds \right)\operatorname*{exp}\left(-2\sigma W_t \right) \Big]\nonumber \\
&\leq x^{-2}_0  \operatorname*{exp}\left\{\left(3\sigma^2  -2\mu  +K \right)t\right\}  < +\infty,
\end{align}

\noindent Theorem \ref{thm:postac PM1} may be applied for all $\beta > \operatorname{max}\left\{2\mu + 2\sigma^2, K+3\sigma^2 - 2\mu \right\}$.

\noindent The Hamiltonian of this control problem is

\begin{equation}\label{eq:priklad hamiltonian}
\mathcal{H}(x,u,y,z) = x(\mu - u)y + x\sigma z + \operatorname*{ln}(xu) - \beta x y
\end{equation}

\noindent which is a concave function in $(x,u)$. The driver of the backward adjoint equation is obtained as the derivative of $\mathcal{H}$ w.r.t. $x$, i.e.

\begin{equation}\label{eq:priklad driver}\nonumber
\frac{\partial}{\partial x}\mathcal{H}(x,u,y,z) = (\mu - u - \beta)y + \sigma z +\frac{1}{x}.
\end{equation}

\noindent To find the maximal point of $\mathcal{H}$ we lay

\begin{equation}\label{eq:priklad extrem}\nonumber
\frac{\partial}{\partial u}\mathcal{H}(x,u,y,z) = -xy + \frac{1}{u} =0,
\end{equation}

\noindent which leads to

\begin{equation}\label{eq:priklad optimalni rizeni}
\widehat{u}_t = \frac{1}{\widehat{X}_t\widehat{Y}_t},
\end{equation}

\noindent and the associated BSDE is

\begin{equation}\label{eq:priklad hamilt bsde inf}
-d\widehat{Y}_t =  \left( (\mu - \widehat{u}_t - \beta)\widehat{Y}_t + \sigma \widehat{Z}_t +\frac{1}{\widehat{X}_t} \right) dt - \widehat{Z}_tdW_t, \quad \forall t \geq 0.
\end{equation}

\noindent The solution to BSDE (\ref{eq:priklad hamilt bsde inf}) for a general admissible control $u(\cdot)$ can be find along similar lines as in \cite{oksendal priklad}. We obtain

\begin{equation}\label{eq:reseni BSDE}
Y_t = \frac{1}{X_t \beta}.
\end{equation}

\noindent Finally, using (\ref{eq:priklad optimalni rizeni}) and (\ref{eq:reseni BSDE}) the optimal control is

\begin{equation}\label{eq: reseni}
\widehat{u}_t = \beta.
\end{equation}

\section{Controlled Stochastic logistic equation}
\noindent In this section we assume that the controlled dynamics is given by a stochastic differential equation with controlled logistic equation as a special case. It provides an example of a system with concave nonlinear coefficients. Here, the DCP is defined as follows. A one dimensional controlled state dynamics is given by

\begin{align}\label{eq: priklad 3 difuse 2}
dX_t &=X_t F\left(X_t, u_t \right)dt + \sigma \left(X_t \right) dW_t, \quad \forall t\geq 0, \nonumber \\
X_0 &=x_0,
\end{align}

\noindent where $u(\cdot)$ is a control process with values in
$U = \left[ \underline{U}, \overline{U}\right] \times \left[ \underline{V}, \overline{V}\right] \subset \mathbb{R}^2_+ = [0, +\infty)^2$ and $x_0 >0$ is a deterministic initial value. The functions $F: \mathbb{R} \times U \rightarrow \mathbb{R}$ and $\sigma: \mathbb{R} \rightarrow \mathbb{R}$ are continuously differentiable in $x$,$F$ is continuous in $(x,u)$, functions $b(x,u)= xF(x,u)$ and $\sigma(x)$ are concave in all their variables and we further assume that

\begin{itemize}
\item $\sigma(0) = 0,$ \\

\item $\sigma(\cdot)$ is globally Lipschitz, i.e. there exists $L > 0$ constant such that

\begin{equation}\nonumber
| \sigma(x_1) - \sigma(x_2)| \leq L |x_1 - x_2|,\ \forall (x_1, x_2) \in \mathbb{R}^2 ,
\end{equation}

\item there exists $M >0$ constant such that $|\sigma'(x)|\leq M,\ \forall x \in \mathbb{R}$, \\

\item there exists $\mu_1 \in \mathbb{R}$ such that

\begin{align}\nonumber
\left( x_1-x_2 \right) \big( x_1F(x_1,u) - x_2F(x_2,u)\big) \leq \mu_1 |x_1 - x_2|^2,\ \forall x_1, x_2 \geq 0, u \in  U,
\end{align}

\item there exists $\mu_2 \in \mathbb{R}$ such that

\begin{align}\nonumber
\big( y_1-y_2\big)^2 \left(F(x,u) + x\frac{\partial}{\partial x}F(x,u) \right) \leq \mu_2 |y_1 - y_2|^2,
\end{align}
$ \forall x>0, u \in U, (y_1, y_2) \in \mathbb{R}^2$.

\item there exist constants $r, R, C >0$ such that

\begin{align}\nonumber
-F(x,u) &\leq C,\ \forall x \in (0,r) , u \in  U, \\
F(x,u) &\leq C,\ \forall x \in (R,+\infty) , u \in  U.
\end{align}
\end{itemize}

\vspace{3mm}
\noindent Notice that the two functions $b(x,u) = xF(x,u)$ and $\sigma(x)$ fulfil the assumptions (H1) - (H6).
Assume further that

\begin{equation}\label{eq:omezeni F}
xF\left(x,\left(\underline{U}, \overline{V} \right)\right)  \leq xF(x,u) \leq xF\left(x,\left(\overline{U},\underline{V} \right)\right)  ,\ \forall x > 0,\ \forall u \in U,
\end{equation}
\noindent and denote the corresponding (constant) controls as $u_{lower} = \left(\underline{U}, \overline{V} \right)$ and $u_{upper} = \left(\overline{U},\underline{V} \right)$.\\

\noindent Note that the notation $F(\cdot,\cdot)$ in the drift in equation (\ref{eq: priklad 3 difuse 2}) covers situations of controlled drift such as: $xF(x,u) = ax(u_1 - bu_2x)$ for $u = (u_1, u_2)'$,
in which one can recognize the logistic equation with controlled both the saturation level of the diffusion and the speed of reaching this level.\\

\noindent The set of admissible controls $\mathcal{U}_{ad}$ is defined as

\begin{equation}\label{eq: priklad 3 pripustna rizeni}
\mathcal{U}_{ad} = \left\{ u: \mathbb{R}_+ \times \Omega \rightarrow U: u \in \mathbf{L}^2_{\mathcal{F},loc}\left(0,+\infty;U \right)   \right\}.
\end{equation}

\noindent The functional to be minimized over all admissible controls $u(\cdot)$ is

\begin{equation}\label{eq: priklad 3 funkcional}
J(u(\cdot)) = \mathbf{E} \int^{+\infty}_0 e^{-\beta t}\left( cX_t^2 + h |u_t|^2 \right)dt,
\end{equation}

\noindent where $c,h>0$ and $\beta >0$ is a discount factor. \\

\noindent First we start by showing that the solution to (\ref{eq: priklad 3 difuse 2}) for $x_0 >0$ stays in $(0,+\infty)$ for all $t\geq 0$ a.s., i.e.
it does not explode in finite time and the point $0$ is not attainable for the solution a.s.

\noindent We prove it in two steps. First, we prove this statement for the solution $X^{\widetilde{u}}=\widetilde{X}$ to (\ref{eq: priklad 3 difuse 2}) computed for an arbitrary but fixed constant control $\widetilde{u} \in U$. To proceed define for every $n \in \mathbb{N}$ and $\delta >0$ two stopping times
\begin{align}\label{eq:priklad 3 stop time}
\tau_n &=   \operatorname*{inf} \{t>0: |\widetilde{X}_t|\geq n \}, \nonumber \\
\tau_{\delta} &= \operatorname*{inf} \{t>0: |\widetilde{X}_t| \leq \delta \}.
\end{align}

\noindent Note that by a.s. continuity of $\widetilde{X}$, $\tau_n$ is a nondecreasing sequence of times almost surely and $\tau_{\delta}$ is a nondecreasing with $\delta$ decreasing to $0$.
Therefore there exist stopping times $\varepsilon$ (explosion time) and $\kappa$ (hitting time of $0$) so that
\begin{align}\label{eq:priklad 3 stop time limity}
\tau_n &\nearrow \varepsilon, \quad n \rightarrow +\infty,\ \text{a.s.}  \nonumber \\
\tau_{\delta} &\nearrow \kappa, \quad \delta \rightarrow 0_+,\ \text{a.s.}
\end{align}

\noindent Further, the infinitesimal generator of the Markov semigroup corresponding to $\widetilde{X}$ is given by the formula

\begin{equation}\nonumber\label{eq:priklad 3 generator}
\mathcal{L}f(x) = xF(x,\widetilde{u})\frac{d}{dx}f(x) + \frac{1}{2}\sigma^2(x)\frac{d^2}{dx^2}f(x),
\end{equation}
\noindent for $f \in \mathcal{C}^2(\mathbb{R}_+)$. Using the Lyapunov function $V(x) = 1 + \frac{1}{x} + x^2$ and assumptions on $\sigma(\cdot)$ and $F(\cdot,\cdot)$ one gets
\begin{equation}\nonumber \label{eq:priklad 3 generator ljap}
\mathcal{L}V(x) = xF(x,\widetilde{u})\left(-\frac{1}{x^2}+ 2x \right) +  \frac{1}{2}\sigma^2(x)\left(\frac{2}{x^3} + 2 \right) \leq K(1+ \frac{1}{x} +x^2) = K V(x),
\end{equation}

\noindent where $K = \operatorname*{max}\left\{L^2 + C, L^2 + 2\mu_1 \right\}$ for $x \in(0,r)$ and $K = \operatorname*{max}\left\{L^2, 2C + L^2 \right\}$ for $x>R$. \\

\noindent Further, $V(x)$ is radially unbounded, i.e.
\begin{align}\label{eq:priklad 3 radial neomezenost ljap}
q_R &= \operatorname*{inf}_{x\geq R} V(x) = 1+ R^2 \rightarrow +\infty \quad  \text{as} \quad R \rightarrow +\infty , \nonumber \\
q_{\delta} &= \operatorname*{inf}_{x\in (0,\delta]} V(x) = 1+ \frac{1}{\delta} + \delta^2 \rightarrow +\infty \quad \text{as} \quad \delta \rightarrow 0_+.
\end{align}

\noindent Now applying It\^o formula to $V\left(\widetilde{X}_{t\wedge \tau_n \wedge \tau_{\delta} }\right)$ and taking $\mathbf{E}(\cdot)$ one arrives at
\begin{align}\label{eq:priklad 3 Ito}
e^{Kt}V(x_0)&\geq \mathbf{E}\left[V\left(\widetilde{X}_{t\wedge \tau_n \wedge \tau_{\delta} }\right) \right] =
\mathbf{E}\left[\mathbf{1}_{\{\tau_n \wedge \tau_{\delta}\leq t\}} V\left(\widetilde{X}_{\tau_n \wedge \tau_{\delta}  }\right) \right]
+ \mathbf{E}\left[\underbrace{\mathbf{1}_{\{\tau_n \wedge \tau_{\delta}\geq t\}} V\left(\widetilde{X}_{t}\right)}_{\geq 0} \right] \nonumber\\
& \geq \mathbf{E}\left[\underbrace{\mathbf{1}_{\{\tau_n \wedge \tau_{\delta}\leq t\}}
\mathbf{1}_{\{\tau_{\delta} \leq \tau_n\}}}_{=\mathbf{1}_{\{\tau_{\delta} \leq t\}}}
 \underbrace{V\left(\widetilde{X}_{\tau_{\delta}  }\right)}_{\geq q_{\delta}} \right]
 +\mathbf{E}\left[\underbrace{\mathbf{1}_{\{\tau_n \wedge \tau_{\delta}\leq t\}}
\mathbf{1}_{\{\tau_{\delta} \geq \tau_n \}}}_{=\mathbf{1}_{\{\tau_n \leq t\}}}
 \underbrace{V\left(\widetilde{X}_{\tau_n  }\right)}_{\geq q_n} \right]\nonumber\\
 &\geq \mathbb{P}\left(\tau_{\delta} \leq t \right)q_{\delta} + \mathbb{P}\left(\tau_n \leq t \right)q_n.
\end{align}

\noindent Therefore, employing (\ref{eq:priklad 3 radial neomezenost ljap}) we have for each $t\geq 0$ fixed

\begin{align}\label{eq:priklad 3 konvergence}
&\mathbb{P}\left(\tau_n \leq t \right)\leq \frac{e^{Kt}V(x_0)}{q_n}\operatorname*{\longrightarrow}_{n \rightarrow +\infty} 0,\nonumber \\
&\mathbb{P}\left(\tau_{\delta}\leq t \right)\leq \frac{e^{Kt}V(x_0)}{q_{\delta}} \operatorname*{\longrightarrow}_{\delta \rightarrow 0_+} 0.
\end{align}

\noindent Taking into account (\ref{eq:priklad 3 stop time limity}) and (\ref{eq:priklad 3 konvergence}) we arrive at

\begin{equation}\label{eq:priklad 3 odhad pravdepodobnosti}
\operatorname*{max}\left\{\mathbb{P}\left(\varepsilon \leq t \right), \mathbb{P}\left(\kappa \leq t \right)   \right\} = 0, \quad \forall t\geq 0.
\end{equation}

\noindent Sending $t \rightarrow +\infty $, we finally obtain

\begin{equation}\label{eq:priklad 3 odhad pravdepodobnosti 2}
\operatorname*{max}\left\{\mathbb{P}\left(\varepsilon < +\infty \right), \mathbb{P}\left(\kappa < +\infty \right)   \right\} = 0,
\end{equation}

\noindent which means that with probability $1$ the solution $\widetilde{X}$ to equation (\ref{eq: priklad 3 difuse 2}) computed for some fixed constant control $\widetilde{u} \in U$ and for $x_0 >0$ does not explode and does not attain $0$ in finite time. \\

\noindent In the second step we show that for any admissible control $u(\cdot) \in \mathcal{U}_{ad}$, the solution $X = X^{u(\cdot)}$ to equation (\ref{eq: priklad 3 difuse 2}) satisfies

\begin{equation}\label{eq:priklad 3 sevreni reseni}
X^{u_{\text{lower}}}_t \leq X^{u(\cdot)}_t \leq X^{u_{\text{upper}}}_t,\ \forall t\geq 0,
\end{equation}

\noindent for the common initial condition $x_0>0$.\\

\noindent The result of the previous step holds in particular for $X^{u_{\text{lower}}}$ and $X^{u_{\text{upper}}}$.
For proving the unattainability of $0$, a comparison theorem will be the key tool. We employ Theorem 1.1 and Remark 1.1 in Ikeda, Watanabe, \cite{Ikeda} for special choice of functions

\begin{align}\label{eq:priklad 3 Ikeda}\nonumber
\beta_1(t,\omega) &= b_1(t,X^{u_{\text{lower}}}_t) = b_2(t,X^{u_{\text{lower}}}_t) = X^{u_{\text{lower}}}_t F\left(X^{u_{\text{lower}}}_t, u_{\text{lower}} \right),  \\
\beta_2(t,\omega) &= X^{u(\cdot)}_t F\left(X^{u(\cdot)}_t, u(t) \right).\nonumber
\end{align}

\noindent Using (\ref{eq:omezeni F}) we know that

\begin{equation}\label{eq:priklad 3 odhad reseni}
X^{u(\cdot)}(t \wedge \tau'_{\delta}) F\left(X^{u(\cdot)}(t \wedge \tau'_{\delta}), u_{\text{lower}} \right) \leq X^{u(\cdot)}(t \wedge \tau'_{\delta}) F\left(X^{u(\cdot)}(t \wedge \tau'_{\delta}), u(t \wedge \tau'_{\delta}), \right)
\end{equation}

\noindent for all $t\geq 0$, $x_0 >0$ and $\tau'_{\delta} = \operatorname*{inf} \{t>0: |X^{u(\cdot)}_t| \leq \delta \}$, $\delta >0$. Applying Theorem 1.1 and Remark 1.1 in \cite{Ikeda} one gets

\begin{equation}\label{eq:priklad 3 odhad reseni 2}
0< X^{u_{\text{lower}}}(s) \leq X^{u(\cdot)}(s),\forall s \in [0,\tau'_{\delta} ), \forall \delta >0.
\end{equation}

\noindent From the construction of stopping times $\tau'_{\delta}$, $\tau_{\delta}$ (the latter now defined in terms of the process $X^{u_{\text{lower}}}$), by (\ref{eq:priklad 3 odhad reseni 2}) and from the first step of the proof it follows that

\begin{equation}\label{eq:priklad 3 srovnani casu dosazeni 0}
0< \tau_{\delta} \leq \tau'_{\delta},
\end{equation}
 for all $\delta>0$. Since $\tau_{\delta} \rightarrow +\infty$ almost surely as $\delta \rightarrow 0_+$ we conclude that $X^{u(\cdot)}$ does not attain $0$ for all $t>0$ almost surely. Nonexplosion of $X^{u(\cdot)}$ can be proved similarly.\\

\noindent By Remark \ref{rm:oblast G} we conclude that Theorem \ref{thm:postac PM1} (or Theorem \ref{postac PM2}) can be applied for the above DCP. \\

\subsection{The controlled logistic equation}

\noindent  In the rest of the section we derive the form of the associated FBSDE and the Hamiltonian for a special choice of $F(\cdot,\cdot)$, namely
for $xF(x,u)=ax(1 - bx) + \gamma u,$ for $a,b >0, u \in [u_1,u_2], u_1 >0,\ \gamma \in \mathbb{R}$, i.e. the additive control is one dimensional process. The diffusion term is given by $\sigma(x) = \sigma x$ for some $\sigma >0$. The one dimensional controlled diffusion therefore evolves according to the SDE

\begin{align}\label{eq:logist aditiv}
dX_t &=a X_t \left( 1-bX_t \right)dt + \gamma u_t dt + \sigma X_t dW_t, \quad \forall t\geq 0,  \nonumber \\
X_0 &=x_0 > 0.
\end{align}

\noindent The functional to be minimized over all admissible controls $u(\cdot)$ is

\begin{equation}\label{eq:priklad 3 funkcional 2}
J(u(\cdot)) = \mathbf{E} \int^{+\infty}_0 e^{-\beta t}\left( cX_t^2 + h u^2_t\right)dt,
\end{equation}

\noindent where $c,h>0$ and $\beta >0$ is a discount factor. Again, we will maximize $-J(u(\cdot))$ and we see that the Hamiltonian is of the form

\begin{equation}\label{eq:priklad 3 hamiltonian}
\mathcal{H}(x,u,y,z) = a(x - bx^2)y + \gamma u y + \sigma x z - cx^2 - hu^2 - \beta x y,
\end{equation}

\noindent which is a concave function in $(x,u)$. The driver of the backward adjoint equation is obtained as the derivative of $\mathcal{H}$ w.r.t. $x$, i.e.

\begin{equation}\label{eq:priklad 3}\nonumber
\frac{\partial}{\partial x}\mathcal{H}(x,u,y,z) = (a -\beta)y - 2abxy +  \sigma z -2cx.
\end{equation}

\noindent Therefore, the associated BSDE reads

\begin{equation}\label{eq:priklad3 bsde}
-dY_t =  \left[ (a -\beta)Y_t - 2abX_tY_t +  \sigma Z_t -2cX_t  \right] dt - Z_t dW_t, \quad \forall t \geq 0.
\end{equation}

\noindent To find the maximal point of $\mathcal{H}$ w.r.t. $u$ we lay

\begin{equation}\label{eq:priklad extrem 2}\nonumber
\frac{\partial}{\partial u}\mathcal{H}(x,u,y,z) = \gamma y - 2hu =0,
\end{equation}

\noindent to obtain the argmax to the quadratic function in $u$ given by $\mathcal{H}$, i.e.

\begin{equation}\label{eq:priklad optimalni rizeni 2}
u_{max} = \frac{\gamma}{2h}y.
\end{equation}

\noindent Finally, the optimal control is obtained by studying the mutual position of $u_{max}$ and the interval $[u_1, u_2]$ which leads to the optimal control

\begin{equation}\label{eq:priklad optimalni rizeni 3}
\widehat{u}_t = \widetilde{u}\left(Y_t \right),
\end{equation}
where

\begin{equation}\label{eq:priklad3 tvar optimalniho rizeni}
\widetilde{u}\left(y\right) = \left\{
  \begin{array}{l l}
    &u_1,\ y \leq \frac{2h}{\gamma}u_1, \\ \\
    & \frac{\gamma}{2h}y,\ y \in \left[\frac{2h}{\gamma}u_1, \frac{2h}{\gamma}u_2 \right], \\ \\
    & u_2,\  y \geq \frac{2h}{\gamma}u_2.
  \end{array} \right.
\end{equation}

\noindent It can be easily seen that $\widehat{u}(\cdot)$ is bounded by $u_2$, continuous and nondecreasing. The last thing which is not
obvious is that $\widehat{u}_t$ is indeed an admissible control, i.e. that FBSDE (\ref{eq:logist aditiv}), (\ref{eq:priklad3 bsde}) admits a unique solution
with $\widehat{u}_t$ plugged. When this is shown then according to Theorem \ref{thm:postac PM1} (or Theorem \ref{postac PM2}) $\widehat{u}_t$ is optimal.

The corresponding FBSDE in the present case reads

\begin{align}\label{eq: FBSDE 3}
dX_t &=a X_t \left( 1-bX_t \right)dt + \gamma \widetilde{u}\left(Y_t \right) dt + \sigma X_t dW_t,  \nonumber \\
-dY_t &=  \left[ (a -\beta)Y_t - 2abX_tY_t +  \sigma Z_t -2cX_t  \right] dt - Z_t dW_t,  \nonumber \\
X_0 &=x_0 > 0.
\end{align}

\noindent FBSDE with the non-Lipschitz term $-2abxy$ in the driver of the backward part are not covered (up to authors' knowledge) with the existing theory. \\
\noindent We start with the local uniqueness theorem proved for the solution in the space $\mathbf{L}^{2}_{\mathcal{F},loc}\left(\mathbb{R}_+;\mathbb{R}^3 \right)$ which is sufficient for finite time horizon problem.

\begin{lemma}[Local uniqueness]\label{l:lok jednoznacnost}
Let $\left(X^i_t, Y^i_t, Z^i_t \right)_{t \in [t_0, t_0 + \delta]},\ i =1,2$ be two solution to FBSDE (\ref{eq: FBSDE 3}) for some $\delta >0,\ t_0 \geq 0$ such that
$X^1_{t_0} = X^2_{t_0}$ and $\ Y^1_{t_0+ \delta} = Y^2_{t_0+ \delta}$. Then

\begin{equation}\label{eq: FBSDE 3 unique}\nonumber
\left(X^1_t, Y^1_t, Z^1_t \right) = \left(X^2_t, Y^2_t, Z^2_t \right), \ \forall t \in [t_0, t_0 + \delta],\ \mathbb{P}- a.s.
\end{equation}
\end{lemma}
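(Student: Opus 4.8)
The plan is to pass to the difference processes $\widehat{X}=X^1-X^2$, $\widehat{Y}=Y^1-Y^2$, $\widehat{Z}=Z^1-Z^2$, which by hypothesis satisfy $\widehat{X}_{t_0}=0$ and $\widehat{Y}_{t_0+\delta}=0$, and to derive a coupled forward--backward Gronwall inequality forcing them to vanish. Two structural features of (\ref{eq: FBSDE 3}) will be exploited: first, $\widetilde{u}(\cdot)$ from (\ref{eq:priklad3 tvar optimalniho rizeni}) is globally Lipschitz (being a truncation of the linear map $y\mapsto\frac{\gamma}{2h}y$), so $|\widetilde{u}(y_1)-\widetilde{u}(y_2)|\le\frac{|\gamma|}{2h}|y_1-y_2|$; second --- and this is decisive --- the state process $X^1$ stays strictly positive on $[t_0,t_0+\delta]$ almost surely, which was established in the first part of this section for an arbitrary admissible control and applies here since $\widetilde{u}(Y^1_\cdot)$ is admissible.

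\emph{Forward estimate.} Applying It\^o's formula to $|\widehat{X}_t|^2$ on $[t_0,t]$ and taking expectations, the drift part splits as $\langle\widehat{X}_s,\,aX^1_s(1-bX^1_s)-aX^2_s(1-bX^2_s)\rangle\le\mu_1|\widehat{X}_s|^2$ (the monotonicity assumption on $x\mapsto xF(x,u)$; equivalently $x\mapsto ax(1-bx)$ has derivative $\le a$ on $(0,\infty)$) plus $\gamma\langle\widehat{X}_s,\widetilde{u}(Y^1_s)-\widetilde{u}(Y^2_s)\rangle\le\varepsilon|\widehat{X}_s|^2+C_\varepsilon|\widehat{Y}_s|^2$ (Lipschitz property of $\widetilde{u}$ and Young's inequality), while the quadratic variation contributes $\sigma^2|\widehat{X}_s|^2$ and the stochastic integral vanishes in expectation after localization. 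Using $\widehat{X}_{t_0}=0$ this gives
\[
\mathbf{E}|\widehat{X}_t|^2\ \le\ C_1\int_{t_0}^{t}\mathbf{E}|\widehat{X}_s|^2\,ds+C_2\int_{t_0}^{t}\mathbf{E}|\widehat{Y}_s|^2\,ds,\qquad t\in[t_0,t_0+\delta].
\]

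\emph{Backward estimate.} Writing the backward driver of (\ref{eq: FBSDE 3}) as $\big((a-\beta)-2abX_t\big)Y_t+\sigma Z_t-2cX_t$, the process $\widehat{Y}$ solves a BSDE that is linear in $(\widehat{Y},\widehat{Z})$ with generator $\big((a-\beta)-2abX^1_t\big)\widehat{Y}_t+\sigma\widehat{Z}_t+g_t$, where $g_t=-(2abY^2_t+2c)\widehat{X}_t$, and terminal value $0$. Introduce $\rho_N=\inf\{t\ge t_0:\ |Y^1_t|\vee|Y^2_t|\ge N\}\wedge(t_0+\delta)$, so $\rho_N\uparrow t_0+\delta$ a.s. by continuity of $Y^i$. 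Applying It\^o's formula to $|\widehat{Y}_t|^2$ on $[t\wedge\rho_N,\rho_N]$, the crucial point is that the diagonal term $-2abX^1_s|\widehat{Y}_s|^2$ is nonpositive (since $X^1_s>0$ and $ab>0$) and may simply be discarded; the off--diagonal term $-2ab\widehat{X}_sY^2_s\widehat{Y}_s$ is bounded by $abN\big(|\widehat{X}_s|^2+|\widehat{Y}_s|^2\big)$ on $[t_0,\rho_N]$; $\sigma\widehat{Y}_s\widehat{Z}_s$ is split by Young's inequality with the resulting $\tfrac12|\widehat{Z}_s|^2$ absorbed on the left; and $-2c\widehat{X}_s\widehat{Y}_s\le c\big(|\widehat{X}_s|^2+|\widehat{Y}_s|^2\big)$. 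Since $\mathbf{E}|\widehat{Y}_{\rho_N}|^2\le 4N^2\,\mathbb{P}(\rho_N<t_0+\delta)\to0$ (using $\mathbf{E}\sup_{[t_0,t_0+\delta]}(|Y^1|\vee|Y^2|)^2<\infty$), one arrives at
\[
\sup_{t_0\le r\le t_0+\delta}\mathbf{E}|\widehat{Y}_{r\wedge\rho_N}|^2+\tfrac12\,\mathbf{E}\!\int_{t_0}^{\rho_N}\!|\widehat{Z}_s|^2\,ds\ \le\ C_3(N)\!\int_{t_0}^{t_0+\delta}\!\!\big(\mathbf{E}|\widehat{X}_{s\wedge\rho_N}|^2+\mathbf{E}|\widehat{Y}_{s\wedge\rho_N}|^2\big)\,ds+\varepsilon_N,
\]
with $\varepsilon_N\to0$ as $N\to\infty$.

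\emph{Conclusion and main difficulty.} Substituting the forward bound into the last inequality yields $\sup_{[t_0,t_0+\delta]}\mathbf{E}|\widehat{Y}_{\cdot\wedge\rho_N}|^2\le C_4(N)\,\delta\,\sup_{[t_0,t_0+\delta]}\mathbf{E}|\widehat{Y}_{\cdot\wedge\rho_N}|^2+\varepsilon_N$; once $\delta$ is small enough that $C_4(N)\delta<1$ --- this smallness, relative to the localization level, is precisely why the statement is local in nature --- we obtain $\sup\mathbf{E}|\widehat{Y}_{\cdot\wedge\rho_N}|^2\to0$, hence $\widehat{Y}\equiv0$ on $[t_0,t_0+\delta]$ after letting $N\to\infty$; then $\widehat{X}\equiv0$ from the forward estimate and $\widehat{Z}\equiv0$ from the $\mathbf{E}\int|\widehat{Z}|^2$ term, which proves the claim. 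The main obstacle is exactly the non--Lipschitz bilinear term $-2abX_tY_t$ in the backward driver: it is what rules out a direct application of the classical FBSDE uniqueness theorems and forces both the localization in $N$ and the restriction on $\delta$. The structural input that makes the argument close is the positivity of $X$ --- obtained earlier in the section via a Lyapunov function and the comparison theorem --- since it is what allows the otherwise uncontrolled diagonal term $-2abX^1_s|\widehat{Y}_s|^2$ to be dropped; alternatively one could remove the $N$--dependence of $\delta$ by carrying the random weight $\exp\!\big(-\lambda\!\int_{t_0}^{t}(|Y^1_s|^2+|Y^2_s|^2)\,ds\big)$ through the backward estimate, using $\int_{t_0}^{t_0+\delta}|Y^i_s|^2\,ds<\infty$ a.s., in the spirit of BSDEs with stochastic Lipschitz coefficients.
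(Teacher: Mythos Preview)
Your overall strategy---pass to the differences, apply It\^o's formula, exploit positivity of the state process to drop the diagonal term $-2abX^1_s|\widehat Y_s|^2$, and close via a forward--backward Gronwall loop---is exactly the spirit of the paper's proof. But the way you close the loop contains a genuine gap.

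The issue is the circular dependence between $N$ and $\delta$. In the lemma, $\delta>0$ is \emph{given} (``local'' here means ``on a bounded interval'', not ``for sufficiently small $\delta$''; the lemma is later invoked on the whole of $[0,T]$). Your absorption step requires $C_4(N)\delta<1$, yet $C_4(N)\to\infty$ because it carries the bound $|Y^2_s|\le N$ coming from your Young split $|\widehat X_sY^2_s\widehat Y_s|\le\tfrac12|\widehat X_s|^2+\tfrac12|Y^2_s|^2|\widehat Y_s|^2$. Hence for the fixed $\delta$ at hand the inequality $C_4(N)\delta<1$ fails for all large $N$, and for the finitely many $N$ where it holds the residual $\varepsilon_N$ is not small. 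The sentence ``we obtain $\sup\mathbf E|\widehat Y_{\cdot\wedge\rho_N}|^2\to0$ \ldots\ after letting $N\to\infty$'' therefore does not follow.

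The paper avoids this by splitting the bilinear cross term the \emph{other} way, putting the random factor $|Y^1_s|^2$ against $|\widehat X_s|^2$ rather than against $|\widehat Y_s|^2$. This yields a backward estimate with right-hand side $C\big[\mathbf E|\widehat Y_{t_0+\delta}|^2+\mathbf E\!\int|\widehat X_s|^2\,ds\big]\exp(K\delta+K\mathbf E\!\int|\widehat X_s|^2\,ds)$ and, after feeding it into the forward inequality, a combined bound
\[
\sup_{t}\mathbf E|\widehat X_t|^2\ \le\ C\big[\mathbf E|\widehat X_{t_0}|^2+\mathbf E|\widehat Y_{t_0+\delta}|^2\big]\exp\!\Big(K\delta+K\,\mathbf E\!\int_{t_0}^{t_0+\delta}\!|Y^1_s|^2\,ds\Big).
\]
Since both boundary terms vanish, the right-hand side is zero regardless of $\delta$; one gets $\widehat X\equiv0$ first, and then $\widehat Y\equiv0$, $\widehat Z\equiv0$ from the backward estimate. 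No smallness of $\delta$ and no localisation in $|Y|$ are needed, because the exponential factors multiply zero. Your closing remark about carrying a random weight $\exp(-\lambda\!\int(|Y^1|^2+|Y^2|^2)\,ds)$ is in fact the device that makes this work and is much closer to the paper's route; it should be the main argument, not an afterthought.
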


\begin{proof}[Proof:]
Denoting $\widehat{X}_t = X^2_t - X^1_t,\ \widehat{Y}_t = Y^2_t - Y^1_t,\ \widehat{Z}_t = Z^2_t - Z^1_t,\ \forall t \in [t_0, t_0 + \delta]$, applying It\^ o formula and using standard estimates one obtains

\begin{align}\label{eq: FBSDE 3 unique 1}\nonumber
\operatorname*{sup}_{t \in [t_0, t_0 + \delta]} \mathbf{E}|\widehat{X}_t |^2 &\leq C \left[\mathbf{E}|\widehat{X}_{t_0} |^2 + \mathbf{E}|\widehat{Y}_{t_0+ \delta} |^2 \right]
\exp\left(K \delta + K \mathbf{E}\int^{t_0+ \delta}_{t_0}|Y^1_s|^2 ds\right), \nonumber \\
\operatorname*{sup}_{t \in [t_0, t_0 + \delta]} \mathbf{E}|\widehat{Y}_t |^2 &+ \left(1 - \frac{\sigma}{\varepsilon_1}\right)\mathbf{E}\int^{t_0+ \delta}_{t_0}|\widehat{Z}_s|^2 ds
\leq C \left[\mathbf{E}|\widehat{Y}_{t_0+ \delta} |^2 + \mathbf{E}\int^{t_0+ \delta}_{t_0}|\widehat{X}_s |^2 ds \right]\nonumber \\
\times &\exp\left(K \delta + K \mathbf{E}\int^{t_0+ \delta}_{t_0}|\widehat{X}_t |^2 dt\right),
\end{align}

\noindent where $C,K > 0$ are some constants and $\varepsilon_1>0 $ is sufficiently small.

\noindent In a similar way one obtains the estimates with $\operatorname{sup}(\cdot)$ inside the expectation

\begin{align}
\left(1 - \varepsilon_2 \frac{C_1}{2}\right) \mathbf{E}\left[\operatorname*{sup}_{t \in [t_0, t_0 + \delta]}|\widehat{X}_t |^2 \right]
&\leq C \left[\mathbf{E}|\widehat{X}_{t_0} |^2 + \mathbf{E}|\widehat{Y}_{t_0+ \delta} |^2 \right]
\exp\left(K \delta + K \mathbf{E}\int^{t_0+ \delta}_{t_0} |Y^1_s|^2 ds\right), \label{eq:odhad x} \\ \nonumber \\
\left(1 - 2 C_1\varepsilon_3 \right)\mathbf{E}\left[\operatorname*{sup}_{t \in [t_0, t_0 + \delta]}|\widehat{Y}_t |^2 \right]
&+ \left(1 -  \frac{\sigma}{\varepsilon_1} - \frac{2c}{\varepsilon_3}\right) \mathbf{E}\int^{t_0+ \delta}_{t_0}|\widehat{Z}_s|^2 ds
\leq C  \left[\mathbf{E}|\widehat{X}_{t_0} |^2 + \mathbf{E}|\widehat{Y}_{t_0+ \delta} |^2 \right]  \nonumber \\
+ &C  \mathbf{E}\int^{t_0+ \delta}_{t_0} |Y^1_s|^2 \cdot |\widehat{X}_s|^2 ds,\label{eq:odhad yz}
\end{align}
where again $C,K > 0$ are some constants, $C_1> 0$ is the constant from Burkholder-Davis-Gundy inequality and $\varepsilon_1, \varepsilon_2, \varepsilon_3>0 $ are some sufficiently small numbers.

\noindent Since $\widehat{X}_{t_0} = \widehat{Y}_{t_0+ \delta} = 0$ uniqueness of the process $X$ follows from (\ref{eq:odhad x}). Uniqueness of the processes $Y,Z$ then follows from (\ref{eq:odhad yz}).
\end{proof}

\vspace{3mm}

\noindent To sketch the construction of the solution we first introduce for each $n \in \mathbb{N}$ the approximated equation

\begin{align}\label{eq: FBSDE 3 approx}
dX^n_t &=a X^n_t \left( 1-bX^n_t \right)dt + \gamma \widetilde{u}\left(Y^n_t \right) dt + \sigma X^n_t dW_t,  \nonumber \\
-dY^n_t &=  \left[ (a -\beta)Y^n_t - 2ab(X^n_t \wedge n )Y^n_t +  \sigma Z^n_t -2c(X^n_t \wedge n )  \right] dt - Z^n_t dW_t,  \nonumber \\
X^n_0 &=x_0 > 0.
\end{align}
\noindent The second equation in the system (\ref{eq: FBSDE 3 approx}) has Lipschitzian r.h.s. and it is well known (Yin, \cite{yin}) that it admits a unique solution $\left(X^n_t, Y^n_t, Z^n_t \right)_{t \geq 0}$ in $\mathbf{L}^{2, -\beta}_{\mathcal{F}}\left(\mathbb{R}_+;\mathbb{R}^3 \right)$
for each $n \in \mathbb{N}$ and for some exponential weight $\beta >0$ specified later.
We also note that the approximate drivers $h_n(x,y,z) = (a -\beta)y - 2ab(x \wedge n )y +  \sigma z -2c(x \wedge n )$ converge pointwise to the original driver $h(x,y,z) = (a -\beta)y - 2abxy +  \sigma z -2cx$ as $n \rightarrow +\infty$ and that the solution process $X$ inherits all the properties (nonexplosion etc.) proved above using the Lyapunov function. \\

\noindent Using standard techniques it is not difficult to derive the following uniform estimates (in $n$) of the process $X$ for
$\beta > \sigma^2 + 2a + \frac{C_1\sigma}{\varepsilon_3} + \max\left\{\gamma u_2, \frac{\gamma \varepsilon}{2} \right\}$

\begin{align}\label{eq:apriori odhady x approx}
\operatorname*{sup}_{t \geq 0}\mathbf{E}\left[e^{-\beta t}|X^n_t |^2 \right]  +  \left(1 - C_1 \sigma \varepsilon_3 \right) \mathbf{E}\left[\operatorname*{sup}_{t \geq 0} e^{-\beta t}|X^n_t |^2 \right]
&\leq x^2_0 + \frac{\gamma u_2}{\beta} < +\infty, \\ \nonumber \\
\operatorname*{sup}_{t \geq 0}\mathbf{E}\left[e^{-\beta t}|\widehat{X}_t |^2 \right]  +  \left(1 - C_1 \sigma \varepsilon_3 \right) \mathbf{E}\left[\operatorname*{sup}_{t \geq 0} e^{-\beta t}|\widehat{X}_t |^2 \right]
& \leq \frac{\gamma^3}{h^2 \varepsilon} \mathbf{E}\int^{+ \infty}_{0}e^{-\beta s} |\widehat{Y}_s|^2 ds,
\end{align}
where $\varepsilon, \varepsilon_3>0 $ are some sufficiently small numbers and $\widehat{X} = X^m - X^n,\ \widehat{Y} = Y^m - Y^n$ for each $m,n \in \mathbb{N}$. \\

\noindent Now, let $n,N \in \mathbb{N}$ be arbitrary and $t \geq 0$. Using previous estimates and Chebyshev inequality one can examine the boundedness of the process $X$ in probability
(uniformly in $t$ on any finite time interval $[0,T]$ for each $T>0$ fixed) in the following meaning

\begin{equation}\label{eq:apriori odhady x approx v psti 2}
\mathbb{P}\left( |X^n_t | > N\right) = \mathbb{P}\left(  e^{-\frac{\beta}{2} t}|X^n_t | > e^{-\frac{\beta}{2} t} N\right)
\leq \frac{\mathbf{E}\left( e^{-\beta t}|X^n_t |^2 \right)}{e^{-\beta t}N}\leq \frac{x^2_0 + \frac{\gamma u_2}{\beta}}{e^{-\beta T}N} \mathop{\longrightarrow }\limits_{N \to +\infty }  0,\\
\end{equation}
\noindent which will be a corner stone for the construction of the solution on finite time horizon. Therefore we consider the finite time horizon approximated equation

\begin{align}\label{eq: FBSDE 3 approx fin}
dX^n_t &=a X^n_t \left( 1-bX^n_t \right)dt + \gamma \widetilde{u}\left(Y^n_t \right) dt + \sigma X^n_t dW_t,\ t \in (0,T]  \nonumber \\
-dY^n_t &=  \left[ (a -\beta)Y^n_t - 2ab(X^n_t \wedge n )Y^n_t +  \sigma Z^n_t -2c(X^n_t \wedge n )  \right] dt - Z^n_t dW_t,\ t \in [0,T)  \nonumber \\
X^n_0 &=x_0 > 0,\nonumber \\
Y^n_T &= 0.
\end{align}

\noindent Employing classical results on BSDE we know that there is a unique the solution to (\ref{eq: FBSDE 3 approx fin}) in $\mathbf{L}^{2, -\beta}_{\mathcal{F}}\left([0,T];\mathbb{R}^3 \right)$ for each $T>0$ fixed and for each $n \in \mathbb{N}$. \\

\noindent Further, let $\Gamma_N$ be a cylindrical subset of $\mathbb{R}^3$ of the form

\begin{equation}\label{eq:cylindr}\nonumber
\Gamma_N = B(0,N)\times \mathbb{R} \times \mathbb{R},
\end{equation}
where $B(0,N)$ is a ball of radius $N \in \mathbb{N}$ with center in origin. Now we are ready to prove the consistency theorem.

\begin{theorem}\label{thm: konsistence reseni FBSDE}[Consistency] Let $T>0$ be arbitrary but fixed, $N \in \mathbb{N}$ be some (large enough) index and $m,p \in \mathbb{N},\ m,p>N$. Let
$\beta > \sigma^2 + 2a + \frac{C_1\sigma}{\varepsilon_3} + \max\left\{\gamma u_2, \frac{\gamma \varepsilon}{2} \right\}$
and let $\left(X^m_t, Y^m_t, Z^m_t \right)_{t \in [0,T]}$ and
$\left(X^p_t, Y^p_t, Z^p_t \right)_{t \in [0,T]}$ respectively be two solutions to FBSDE (\ref{eq: FBSDE 3 approx fin}) in $\mathbf{L}^{2, -\beta}_{\mathcal{F}}\left([0,T];\mathbb{R}^3 \right)$ computed for $n=m$, $n=p$ respectively. Then

\begin{equation}\label{eq:consistence 1}
\left(X^m_t, Y^m_t, Z^m_t \right)(\omega) = \left(X^p_t, Y^p_t, Z^p_t \right)(\omega),\ \forall t \in [0,T],\ \forall \omega \in \Omega_{\Gamma_N},
\end{equation}
where $\Omega_{\Gamma_N} = \left\{\omega \in \Omega: \left(X^m_t, Y^m_t, Z^m_t \right)(\omega)\in \Gamma_N, \left(X^p_t, Y^p_t, Z^p_t \right)(\omega) \in \Gamma_N,\ \forall t \in [0,T] \right\}$.
\end{theorem}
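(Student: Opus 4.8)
\noindent The plan is to deduce the statement from the local uniqueness of Lemma~\ref{l:lok jednoznacnost} by a localization argument. Fix $N$ with $x_0 \in B(0,N)$ and set
\[
\rho \;:=\; T \wedge \inf\bigl\{\, t \ge 0 : |X^m_t| \ge N \ \text{or}\ |X^p_t| \ge N \,\bigr\}.
\]
Because $m,p > N$, for $t \le \rho$ one has $X^m_t \wedge m = X^m_t$ and $X^p_t \wedge p = X^p_t$, so up to time $\rho$ both triples solve the \emph{same} un-truncated FBSDE~(\ref{eq: FBSDE 3}) with the common initial value $x_0$. Moreover, since $\Gamma_N$ involves the ball $B(0,N)$, on $\Omega_{\Gamma_N}$ one has $\rho = T$, and there the stopped processes agree with the original ones while the two backward parts share the terminal value $0$. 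It therefore suffices to show that the differences $\widehat X := X^m - X^p$, $\widehat Y := Y^m - Y^p$, $\widehat Z := Z^m - Z^p$ vanish on $\Omega_{\Gamma_N}$.

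\noindent The main step is an energy estimate on $[0,\rho]$ mimicking the computation behind Lemma~\ref{l:lok jednoznacnost} and the bounds~(\ref{eq:odhad x})--(\ref{eq:odhad yz}). Applying It\^o's formula to $e^{-\beta t}|\widehat X_{t\wedge\rho}|^2$ and to $e^{\beta t}|\widehat Y_{t\wedge\rho}|^2$, one uses that on $[0,\rho]$ the state is bounded by $N$: the forward drift difference is then controlled by $a(1+2bN)\,|\widehat X| + \tfrac{\gamma^2}{2h}\,|\widehat Y|$ (the logistic drift being Lipschitz on $B(0,N)$, and $\widetilde u$ globally Lipschitz), and the only genuinely non-Lipschitz term of the backward driver difference, $-2ab\bigl(X^m Y^m - X^p Y^p\bigr) = -2ab\bigl(X^m\widehat Y + Y^p\widehat X\bigr)$, is bounded on $[0,\rho]$ by $2abN|\widehat Y| + 2ab|Y^p|\,|\widehat X|$. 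The last contribution is absorbed exactly as in Lemma~\ref{l:lok jednoznacnost}: the forward estimate and $\widehat X_0 = 0$ give $\sup_{t\le T}\mathbf{E}|\widehat X_{t\wedge\rho}|^2 \le C_N\,\mathbf{E}\!\int_0^\rho e^{-\beta s}|\widehat Y_s|^2\,ds$ up to the finite factor $\exp\!\bigl(K\,\mathbf{E}\!\int_0^\rho |Y^p_s|^2\,ds\bigr)$ (finite since $Y^p \in \mathbf{L}^{2,-\beta}_{\mathcal{F}}([0,T];\mathbb{R})$); substituting this into the backward estimate, a Gronwall argument forces $\widehat X \equiv 0$, hence $\widehat Y \equiv \widehat Z \equiv 0$, on $[0,\rho]$ --- provided the terminal contribution $\mathbf{E}\bigl(e^{\beta\rho}|\widehat Y_\rho|^2\bigr)$ can be removed. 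The exponential weight $\beta$ from the hypothesis enters only to keep the signs of the coefficients of $|\widehat Z|^2$ and $|\widehat Y|^2$ consistent with~(\ref{eq:apriori odhady x approx}); the uniqueness mechanism on the bounded interval $[0,\rho]$ does not really need it.

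\noindent Removing that terminal term is the delicate point, and the main obstacle: since the FBSDE is genuinely coupled, stopping the backward equation at the random time $\rho$ does not supply a known terminal datum on $\{\rho < T\}$, so the estimate cannot simply be run over all of $\Omega$. The point is that $\rho = T$ and $\widehat Y_\rho = \widehat Y_T = 0$ hold precisely on $\Omega_{\Gamma_N}$, so one must organise the localization so that the conclusion is drawn \emph{pathwise} on $\Omega_{\Gamma_N}$ --- for instance, by comparing, for the fixed $N$, each of $(X^m,Y^m,Z^m)$ and $(X^p,Y^p,Z^p)$ with the unique solution of the $N$-truncated system~(\ref{eq: FBSDE 3 approx fin}) on the maximal random interval on which all the processes remain in $\overline{B}(0,N)$, and invoking pathwise uniqueness there via the argument of Lemma~\ref{l:lok jednoznacnost} applied to the truncated system. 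Once the estimate is closed in this way, one obtains $\bigl(X^m_t,Y^m_t,Z^m_t\bigr)(\omega) = \bigl(X^p_t,Y^p_t,Z^p_t\bigr)(\omega)$ for all $t \in [0,T]$ and all $\omega \in \Omega_{\Gamma_N}$, which is~(\ref{eq:consistence 1}).
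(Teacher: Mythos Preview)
Your approach via the exit time $\rho$ and It\^o energy estimates on $[0,\rho]$ does not close, and you have essentially identified the reason yourself: the backward equation stopped at $\rho$ has an unknown terminal datum on $\{\rho<T\}$, so the term $\mathbf{E}\bigl(e^{\beta\rho}|\widehat Y_\rho|^2\bigr)$ cannot be dropped. Your suggested fix --- compare each triple to the unique solution of the $N$-truncated system on the maximal interval where all processes stay in $\overline{B}(0,N)$ --- is the right instinct but remains vague: you still need a mechanism that turns an $L^2$ uniqueness statement (which is what Lemma~\ref{l:lok jednoznacnost} provides) into a \emph{pathwise} conclusion on the event $\Omega_{\Gamma_N}$. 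As written, the proposal is a sketch with an acknowledged gap rather than a proof.

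The paper's route is simpler and sidesteps the stopping-time difficulty entirely. One works directly with $\omega\in\Omega_{\Gamma_N}$ and observes that along such a path $|X^p_t(\omega)|\le N<p<m$ and $|X^m_t(\omega)|\le N$ for all $t\in[0,T]$, so the truncations are inactive: $X^p_t\wedge p=X^p_t\wedge m=X^p_t$ and similarly for $X^m$. Hence on these paths $h_p\bigl(V^p_t(\omega)\bigr)=h_m\bigl(V^p_t(\omega)\bigr)$, which means that $V^p(\omega)$ satisfies the \emph{same} integral relation as $V^m(\omega)$, namely the $m$-truncated one. Since the $m$-truncated FBSDE has globally Lipschitz coefficients, one then invokes Lemma~\ref{l:lok jednoznacnost} (applied with $n=m$) on $[0,T]$, with common initial value $x_0$ and common terminal value $0$, to conclude $V^p=V^m$ on $\Omega_{\Gamma_N}$. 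The key idea you are missing is that no stopping is needed: both processes are, for these $\omega$, solutions on the full interval $[0,T]$ of one and the same Lipschitz approximated system, so uniqueness applies directly.
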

\noindent In other words, all the trajectories which stay in the cylinder $\Gamma_N$ coincide.

\begin{proof}[Proof:]
Let $\omega \in \Omega_{\Gamma_N}$ and denote $V^n_t = \left(X^n_t, Y^n_t, Z^n_t \right)$ for all $n \in \mathbb{N}$.
Then for $\left(V^p_t\right)(\omega)$ and $\left(V^m_t\right)(\omega)$, $m>p$, we obtain

\begin{align}
Y^p_t(\omega) &=  \int^T_t h_p\left(V^p_s(\omega) \right)\underbrace{\mathbf{1}_{\Gamma_p}\left(V^p_s(\omega) \right)}_{=1} ds  +
\int^T_t h_p\left(V^p_s(\omega) \right)\underbrace{\mathbf{1}_{\Gamma^c_p}\left(V^p_s(\omega) \right)}_{=0} ds - \left(\int^T_t Z^p_s dW_s\right)(\omega), \label{eq:consistence bsde p} \\
Y^m_t(\omega) &=  \int^T_t h_m\left(V^m_s(\omega) \right)\underbrace{\mathbf{1}_{\Gamma_m}\left(V^m_s(\omega) \right)}_{=1} ds  +
\int^T_t h_m\left(V^m_s(\omega) \right)\underbrace{\mathbf{1}_{\Gamma^c_m}\left(V^m_s(\omega) \right)}_{=0} ds \nonumber \\
&\hspace{9cm} - \left(\int^T_t Z^m_s dW_s\right)(\omega).\label{eq:consistence bsde m}
\end{align}
We also notice that

\begin{align}\label{eq:consistence driver}\nonumber
h_p(x,y,z)\mathbf{1}_{\Gamma_p}(x,y,z) &= h_m(x,y,z)\mathbf{1}_{\Gamma_p}(x,y,z) = h(x,y,z)\mathbf{1}_{\Gamma_p}(x,y,z)\ \text{and} \nonumber \\
h_p(x,y,z)\mathbf{1}_{\Gamma_N}(x,y,z) &= h_m(x,y,z)\mathbf{1}_{\Gamma_N}(x,y,z) = h_N(x,y,z)\mathbf{1}_{\Gamma_N}(x,y,z).\nonumber \\
\end{align}
Since $\Gamma_N \subset \Gamma_p \subset \Gamma_m$ for $m>p>N$, $\left(V^p_t\right)(\omega)$ also satisfies the same integral relation (\ref{eq:consistence bsde m}) and the forward equation
of the system, i.e. $\left(V^p_t\right)(\omega)$ solves (\ref{eq:consistence bsde m}) as well, and by the local uniqueness (Lemma \ref{l:lok jednoznacnost} with $n = m$) it follows that
$\left(V^p_t\right)(\omega) = \left(V^m_t\right)(\omega),\ \forall t \in [0,T]$. \\

\noindent Further, we can prove along similar lines that $\left(V^p_t\right)(\omega), \left(V^m_t\right)(\omega)$ solve (\ref{eq: FBSDE 3 approx fin}) for $n=N$.
Putting both arguments together we see that all the solution trajectories that stay in some cylinder (for some index) solve also all the equation with
larger index and all such trajectories coincide.
\end{proof}

\begin{proposition}[Construction of the solution]\label{p:existence reseni konec horiz}
Under the assumptions of Theorem \ref{thm: konsistence reseni FBSDE} there exists a unique solution $\left(X_t, Y_t, Z_t \right)_{[0,T]} \in \mathbf{L}^{2, -\beta}_{\mathcal{F}}\left(\mathbb{R}_+;\mathbb{R}^3 \right)$
of the finite horizon FBSDE

\begin{align}\label{eq: FBSDE fin horiz}
dX_t &=a X_t \left( 1-bX_t \right)dt + \gamma \widetilde{u}\left(Y_t \right) dt + \sigma X_t dW_t,  \nonumber \\
-dY_t &=  \left[ (a -\beta)Y_t - 2abX_tY_t +  \sigma Z_t -2cX_t  \right] dt - Z_t dW_t,  \nonumber \\
X_0 &=x_0 > 0,\nonumber \\
Y_T &= 0.
\end{align}

\end{proposition}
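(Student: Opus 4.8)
The plan is to settle uniqueness at once and then obtain the solution as a pathwise limit of the already-solved truncated systems (\ref{eq: FBSDE 3 approx fin}), patched together by the Consistency Theorem \ref{thm: konsistence reseni FBSDE}. Uniqueness is immediate from Lemma \ref{l:lok jednoznacnost}: any two solutions of (\ref{eq: FBSDE fin horiz}) lying in $\mathbf{L}^{2,-\beta}_{\mathcal{F}}([0,T];\mathbb{R}^3)$ share the initial value $X^1_0=X^2_0=x_0$ and the terminal value $Y^1_T=Y^2_T=0$, so the a priori estimates derived in the proof of Lemma \ref{l:lok jednoznacnost}, which are valid for every $\delta>0$ and in particular for $\delta=T$, force the difference of the two solutions to vanish identically on $[0,T]$ (first the forward component, then $Y$ and $Z$).

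For existence, I would first record that for each $n$ the truncated system (\ref{eq: FBSDE 3 approx fin}) has a unique solution $(X^n,Y^n,Z^n)\in\mathbf{L}^{2,-\beta}_{\mathcal{F}}([0,T];\mathbb{R}^3)$, since its backward driver is Lipschitz in $(y,z)$ with the $x$-dependence rendered bounded by the cut-off, while the forward coefficients satisfy the monotonicity/Lipschitz conditions (H1)--(H3). From the uniform bound (\ref{eq:apriori odhady x approx}) and Chebyshev's inequality the family $\{X^n\}$ is tight in probability on $[0,T]$ uniformly in $n$, in the quantitative form $\sup_n\mathbb{P}\big(\sup_{t\in[0,T]}|X^n_t|\ge N\big)\le e^{\beta T}C_T/N^2\to 0$, a refinement of (\ref{eq:apriori odhady x approx v psti 2}). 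I would also invoke the fact that the Lyapunov computation with $V(x)=1+\tfrac1x+x^2$ carried out above is uniform in $n$, so every $X^n$ is strictly positive and nonexploding; hence on the event where $X^n$ stays inside the ball of radius $N$ throughout $[0,T]$ and $n\ge N$, both occurrences of $X^n_t\wedge n$ are inactive and $(X^n,Y^n,Z^n)$ genuinely solves the untruncated FBSDE (\ref{eq: FBSDE fin horiz}) on $[0,T]$.

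The patching step is where the Consistency Theorem is essential. Fixing a large $N$, Theorem \ref{thm: konsistence reseni FBSDE} ensures that all approximations of index $m,p>N$ coincide $\mathbb{P}$-a.s. on the set of paths that remain in the cylinder $\Gamma_N$ on $[0,T]$. Combining this with the uniform tightness bound I would argue that the event $\Omega_N:=\{\sup_{t\in[0,T]}|X^n_t|<N\text{ for all }n>N\}$ agrees, up to a null set, with $\{\sup_{t\in[0,T]}|X^{N+1}_t|<N\}$ --- inside $\Gamma_N$ all systems (\ref{eq: FBSDE 3 approx fin}) reduce to the single system (\ref{eq: FBSDE fin horiz}), and local uniqueness (Lemma \ref{l:lok jednoznacnost}) propagates the property ``stays in $\Gamma_N$'' from index $N+1$ to every larger index --- so that $\mathbb{P}(\Omega_N^c)\le e^{\beta T}C_T/N^2$ and $\Omega_N$ increases to a set of full probability. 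On $\Omega_N$ I then \emph{define} $(X_t,Y_t,Z_t):=(X^n_t,Y^n_t,Z^n_t)$ for an arbitrary $n>N$; consistency makes this independent of $n$ and compatible across different values of $N$, so $(X,Y,Z)$ is well defined on $\bigcup_N\Omega_N$, solves (\ref{eq: FBSDE fin horiz}) on $[0,T]$ because the cut-offs are inactive on each $\Omega_N$, belongs to $\mathbf{L}^{2,-\beta}_{\mathcal{F}}$ for its forward part by Fatou's lemma applied to (\ref{eq:apriori odhady x approx}), and to $\mathbf{L}^{2,-\beta}_{\mathcal{F}}$ for $(Y,Z)$ by the a priori estimate for the now-Lipschitz backward equation with $X$ treated as a given input.

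I expect the real obstacle to be exactly this patching: the uniform-in-$n$ tightness controls each $X^n$ individually, but converting it into control of the \emph{union} over $n$ of the exceptional events --- equivalently, showing that once one high-index approximation stays in $\Gamma_N$ all higher ones do --- is what forces the use of the pathwise Consistency Theorem rather than a soft compactness argument. The delicate point is that the forward drift is coupled to the backward component $Y$ through $\widetilde{u}(Y)$, so ordinary pathwise uniqueness for SDEs is unavailable and everything must be routed through Lemma \ref{l:lok jednoznacnost}; in particular, justifying the reduction to (\ref{eq: FBSDE fin horiz}) inside $\Gamma_N$ for the backward part, whose terminal datum sits at $T$ rather than at the exit time of $\Gamma_N$, is the subtle measure-theoretic step.
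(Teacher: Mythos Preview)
Your proposal is correct and follows essentially the same route as the paper: use Lemma \ref{l:lok jednoznacnost} for uniqueness, solve the truncated systems (\ref{eq: FBSDE 3 approx fin}), invoke the uniform tightness (\ref{eq:apriori odhady x approx v psti 2}) to get sets of full measure on which the cutoffs are inactive, and patch via the Consistency Theorem. The paper's execution is marginally more economical than yours: rather than defining $\Omega_N$ as the intersection over all $n>N$ and then arguing via consistency that this collapses to a single index, it simply fixes one reference index $p>N$ from the outset, works with the nested sets $\Omega_{p,N}=\{\omega:V^p_\cdot(\omega)\in\Gamma_N\ \text{on }[0,T]\}$, and defines the solution on $\Omega_{p,N+1}\setminus\Omega_{p,N}$ as $V^{N+1}$; the independence from $p$ is then a consequence of consistency rather than an ingredient of the construction. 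Your worry in the final paragraph about the terminal datum sitting at $T$ rather than at an exit time is not an issue: by definition, on $\Omega_{p,N}$ the process stays in $\Gamma_N$ for \emph{every} $t\in[0,T]$, so there is no exit before $T$ and the truncated and untruncated backward equations coincide on the whole interval, terminal condition included.
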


\begin{proof}[Proof:]
\noindent Keeping the notation from the previous proof notice that

\begin{equation}\label{eq:konstrukce reseni}\nonumber
\Omega_{\Gamma_N} = \Omega_{p,N} \cap \Omega_{m,N},
\end{equation}

\noindent where $\Omega_{n,N} = \left\{\omega \in \Omega: \left(X^n_t, Y^n_t, Z^n_t \right)(\omega) \in \Gamma_N,\ \forall t \in [0,T] \right\}$, for $n \in \mathbb{N}$ and due to estimate (\ref{eq:apriori odhady x approx v psti 2}) we have that $\mathbb{P}\left( \Omega_{p,N} \right) \mathop{\longrightarrow }\limits_{N \to +\infty } 1$ uniformly in $p$. Thus we put for any fixed $p > N$

\begin{equation}\label{eq:konstrukce reseni omega}
\widetilde{\Omega}= \bigcup_{N \in \mathbb{N}} \Omega_{p,N},\ \text{with} \ \mathbb{P}\left( \widetilde{\Omega} \right) = 1.
\end{equation}
The unique limit solution process to FBSDE

\begin{align}\label{eq: FBSDE 3 fin}
dX_t &=a X_t \left( 1-bX_t \right)dt + \gamma \widetilde{u}\left(Y_t \right) dt + \sigma X_t dW_t,\ t \in (0,T]  \nonumber \\
-dY_t &=  \left[ (a -\beta)Y_t - 2abX_tY_t +  \sigma Z_t -2cX_t  \right] dt - Z_t dW_t,\ t \in [0,T)  \nonumber \\
X_0 &=x_0 > 0,\nonumber \\
Y_T &= 0.
\end{align}

\noindent is obtained for all $t \in [0,T]$ as

\begin{equation}\label{eq:konstrukce reseni fin}
\left(X_t, Y_t, Z_t \right)(\omega) =
\left\{
	\begin{array}{ll}
		\left(X^N_t, Y^N_t, Z^N_t \right)(\omega),  & \mbox{for } \omega \in  \Omega_{p,N}, \\
		\left(X^{N+1}_t, Y^{N+1}_t, Z^{N+1}_t \right)(\omega), & \mbox{for } \omega \in  \Omega_{p,N+1}\backslash \Omega_{p,N},
	\end{array}
\right.
\end{equation}
for each $N \in \mathbb{N}$. We stress that this construction is independent of the choice of $p$ due to the consistency theorem.
\end{proof}

\noindent Construction of the solution to the original FBSDE (\ref{eq: FBSDE 3}) now follows the lines of analogous proof in Pardoux \cite{darling pardoux} with the process $\left(X_t, Y_t, Z_t \right)$ from Proposition
\ref{p:existence reseni konec horiz} as its finite time horizon approximation. That is, the solution to (\ref{eq: FBSDE 3}) is obtained as a limit of solutions $\left(X^n_t, Y^n_t, Z^n_t \right)_{t\geq 0}$ where

\begin{align}\label{eq: FBSDE 3 infin}
dX^n_t &=a X^n_t \left( 1-bX^n_t \right)dt + \gamma \widetilde{u}\left(Y^n_t \right) dt + \sigma X^n_t dW_t,\ t \in (0,n]  \nonumber \\
-dY^n_t &=  \left[ (a -\beta)Y^n_t - 2abX^n_tY^n_t +  \sigma Z^n_t -2cX^n_t  \right] dt - Z^n_t dW_t,\ t \in [0,n)  \nonumber \\
X^n_0 &=x_0 > 0,\nonumber \\
X^n_t &=X^n_n,t \geq n, \nonumber \\
Y^n_t &= 0, t \geq n \nonumber \\
Z^n_t &= 0, t \geq n.
\end{align}

\end{document}